\documentclass{amsart}

\usepackage{amssymb,amsfonts,amsxtra,amsmath,tikz-cd,xcolor,mathrsfs,
  verbatim,centernot,enumitem,mathtools}
\IfFileExists{quiver.sty}{\usepackage{quiver}}{}
\usepackage{extpfeil}
\usepackage{relsize}
\usepackage[all]{xy}
\usepackage{dsfont}
\usepackage[hidelinks]{hyperref}

\allowdisplaybreaks[2]

\newtheorem{theo}{Theorem}[section]
\newtheorem*{theo*}{Theorem}
\newtheorem{lemm}[theo]{Lemma}
\newtheorem{prop}[theo]{Proposition}
\newtheorem{coro}[theo]{Corollary}

\newtheorem{conj}[theo]{Conjecture}

\theoremstyle{definition}

\newtheorem{defi}[theo]{Definition}
\newtheorem{exam}[theo]{Example}
\theoremstyle{remark}
\newtheorem{rema}[theo]{Remark}
\numberwithin{equation}{section}

\newcommand{\CC}{\mathbb{C}}
\newcommand{\FF}{\mathbb{F}}

\newcommand{\PP}{\mathbb{P}}

\newcommand{\ZZ}{\mathbb{Z}}
\newcommand{\cO}{\mathcal{O}}
\newcommand{\cI}{\mathcal{I}}

\newcommand{\cL}{\mathcal{L}}

\newcommand{\cT}{\mathcal{T}}

\newcommand{\fm}{\mathfrak{m}}

\newcommand{\gr}{\operatorname{gr}}

\newcommand{\rk}{\operatorname{rank}}
\newcommand{\Sing}{\operatorname{Sing}}
\newcommand{\Der}{\operatorname{Der}}

\DeclareMathOperator{\AR}{AR}
\DeclareMathOperator{\md}{md}
\DeclareMathOperator{\diver}{div}

\begin{document}
\title[On Free and Nearly Free Conjecture]
{On Free and Nearly Free Conjecture of Rational Unibranched Projective Plane Curves}

\author{Xiping Zhang}
\address{School of Mathematical Sciences, Key Laboratory of Intelligent Computing and Applications (Ministry of Education),
Tongji University, 
1239 Siping Road, 
Shanghai, China}
\email{xzhmath@gmail.com}
\thanks{}

\date{}

\begin{abstract}
Let $C\subset \PP^2$ be a reduced irreducible complex plane curve with complement $U$. In this paper we prove 
that  $\chi(U)\geq \nu(C)$, where 
$\nu(C)$ is the  freeness defect of $C$ defined in \cite{Dim24}. 

When $C$ is a rational curve with  unibranched singularities, this inequality proves  the conjecture  of Dimca and Sticlaru, that  a  rational  unibranched plane curve is either free
or nearly free.
\end{abstract}
\maketitle

\section{Introduction}

Let $S=\CC[x,y,z]$ be the polynomial ring with its standard degree grading,
and let $C\subset\PP^2$ be a reduced irreducible plane curve cut by $f$
of degree $d\geq 2$. Let $J_f=(f_x,f_y,f_z)$ be the 
Jacobian ideal of $f$. It is natural to ask how the   topology of the   plane curve $C$ is interacted with the algebraic
structure of its Jacobian ideal. 

The free/nearly free conjecture proposed in \cite{DS18a}
 is a particular  instance of this interaction.
Based on the notion of a free curve, Dimca and Sticlaru introduced
nearly free curves by studying the saturation of $J_f$ with respect
to the maximal ideal. For a free curve $J_f$ is
saturated, while for a nearly free curve its saturation quotient is nonzero but each graded piece
has dimension $\leq 1$
(see Definition~\ref{defn; freeandnearlyfree}).
These classes play an important role in the study of global
invariants of plane curves. For example,  
their Milnor algebras $S/J_f$ admit
particularly simple graded resolutions, so their Hilbert series
are determined by the exponents of the curves
(see \cite{DS18a}). In \cite{DS18a}\cite{Dim19} it was shown that their total Tjurina
numbers attain,  or differ by $1$ from the  upper bound $(d-1)^2-\md(f)(d-1-\md(f))$, where $\md(f)$ is the minimal Jacobian syzygy degree.

From a geometric point of view, it is natural to consider the
normalization of $C$ and the local analytic branches at its singular
points. Recall that $C$ is rational if its normalization is isomorphic
to $\PP^1$, and it is unibranched if every singular point has exactly
one local analytic branch.

\begin{conj}[Dimca--Sticlaru]
\label{conj; rationalunibranched}
Every reduced rational   plane curve with only unibranched singularities is either free or nearly free.
\end{conj}

Many cases of this conjecture have already be proved before this present
work. Using Walther's comparison between the Jacobian module and the
cohomology of the Milnor fiber in \cite{Wal17}, Dimca and Sticlaru proved
the conjecture when $d$ is even, when $d$ is a prime power, or when
$\pi_1(\PP^2\setminus C)$ is abelian
(see \cite[Theorem~3.1 and Corollary~3.2]{DS18a}).
They also treated odd degrees under the assumption that no local
monodromy at a cusp has an eigenvalue of order $d$.
Dimca also classified rational unibranched curves of degree
$d\geq6$ with only weighted homogeneous singularities and
showed that they are nearly free (see \cite[Theorem~1.1]{Dim19}).
(see \cite[Theorem~1.1]{Dim19}).  
Further results of Dimca and Sticlaru confirms the conjecture,
in particular, when $d\leq34$ or $\md(f)\leq15$
(see \cite{DS18b}). For a summary of the known results we refer to \cite[\S~5]{Dim24}.

In this paper we provide a uniform proof of Conjecture~\ref{conj; rationalunibranched}.
Let $\nu(C)$ be the freeness defect of $C$ (see Definition~\ref{defn; freedefect}). It is known that $C$ is free if and only if $\nu(C)=0$ and $C$ is nearly free if and only if $\nu(C)=1$. The main result of this paper is:
\begin{theo}
\label{theo; main}
Let $C=V(f)\subset\PP^2_{\CC}$ be a reduced irreducible plane curve
of degree $d\geq2$. Then
\[
\nu(C)\leq\chi(\PP^2\setminus C)=1+2g+\sum_{p\in\Sing C}(b_p-1).
\]
Here $g$ denotes the geometric genus of $C$ and $b_p$ denotes the number of local branches at a singular point $p\in \Sing(C)$.
\end{theo}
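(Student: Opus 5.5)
The plan is to reduce the inequality to a purely numerical statement about the minimal Jacobian syzygy degree $r=\md(f)$. I will combine three ingredients: the description of $\nu(C)$ in terms of $r$ and the total Tjurina number $\tau(C)$, the comparison $\tau(C)\le\mu(C)$ with the total Milnor number, and the topological formula expressing $\mu(C)$ through $g$ and the $b_p$.

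\textbf{Rewriting the right-hand side.} First I express $\chi(U)$, where $U=\PP^2\setminus C$, through the total Milnor number $\mu(C)=\sum_p\mu_p$.
\begin{itemize}
\item Additivity of Euler characteristics gives $\chi(U)=3-\chi(C)$. Since the normalization is a bijection away from the singular points, $\chi(C)=2-2g-\sum_p(b_p-1)$. This yields the stated equality $\chi(U)=1+2g+\sum_p(b_p-1)$.
\item Milnor's formula $\mu_p=2\delta_p-b_p+1$ and the genus formula $g=\binom{d-1}{2}-\sum_p\delta_p$ give
\[
\mu(C)=(d-1)(d-2)-2g-\sum_p(b_p-1).
\]
\item Combining the two, $\chi(U)=(d-1)^2-(d-2)-\mu(C)$.
\end{itemize}
Since $\tau(C)\le\mu(C)$, it therefore suffices to prove
\[
\nu(C)\le (d-1)^2-(d-2)-\tau(C).
\]

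\textbf{Bounding $\nu(C)$ by cases on $r$.} Here I use the description of the freeness defect recalled in Definition~\ref{defn; freedefect}, following \cite{Dim24}, which comes from du Plessis--Wall type results.
\begin{itemize}
\item If $r\le d/2$ (more precisely, in the range where the du Plessis--Wall bound is the relevant one), then $\nu(C)=(d-1)^2-r(d-1-r)-\tau(C)$. The required inequality becomes $r(d-1-r)\ge d-2$. This holds for all $1\le r\le d-2$, because $r\mapsto r(d-1-r)$ is concave and equals $d-2$ at $r=1$ and at $r=d-2$.
\item In the complementary range, $\nu(C)$ is bounded by $\lceil\tfrac34(d-1)^2\rceil-\tau(C)$. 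The required inequality becomes $d-2\le (d-1)^2-\lceil\tfrac34(d-1)^2\rceil=\lfloor (d-1)^2/4\rfloor$. This is elementary: $\lfloor (d-1)^2/4\rfloor-(d-2)=\lfloor (d-3)^2/4\rfloor\ge0$ for every $d\ge2$.
\end{itemize}

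\textbf{Excluding the degenerate value $r=0$.} The case analysis needs $r\ge1$ (and $r\le d-2$ in the first range). A syzygy of degree $0$ means $f$ is annihilated by a constant vector field. Then $C$ is a cone, that is, a union of concurrent lines. This is impossible for an irreducible curve unless $d=1$, which is excluded by $d\ge2$. The bound $r\le d-1$ always holds via Koszul syzygies, and the value $r=d-1$ falls in the second range when $d\ge3$. The small cases $d=2,3$ can be checked directly; smooth conics and cubics fit the general argument anyway.

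\textbf{Main obstacle.} I expect the real difficulty to be using the correct expression or bound for $\nu(C)$ in each range of $r$. In the range $r\ge d/2$, the defect is not given by the du Plessis--Wall formula, and I will need the precise statement from \cite{Dim24}, which combines $\max_j\dim N(f)_j$ with the lower bound on $\tau$ in that range. I would first try to show that in every range $\nu(C)+\tau(C)\le (d-1)^2-(d-2)$. If the available bound there is weaker than $\lceil\tfrac34(d-1)^2\rceil$, I would instead bound $\dim N(f)_j$ directly. For this I would use the self-duality of $N(f)$ about $T/2$, where $T=3(d-2)$, together with the estimate $\dim N(f)_j\le \tau_{\max}(d,r)-\tau(C)$.
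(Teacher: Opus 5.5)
\textbf{There is a genuine gap: your first reduction runs in the wrong direction.} You correctly derive $\chi(U)=(d-1)^2-(d-2)-\mu(C)$. But $\tau(C)\le\mu(C)$ then gives $\chi(U)\le (d-1)^2-(d-2)-\tau(C)$. So the inequality you go on to prove, $\nu(C)\le (d-1)^2-(d-2)-\tau(C)$, only yields
\[
\nu(C)\le \chi(U)+\bigl(\mu(C)-\tau(C)\bigr).
\]
This gives the theorem only when $\mu(C)=\tau(C)$. By K.~Saito's theorem, that is exactly when every singularity is weighted homogeneous, which is the easy case (compare \cite{Dim19}).

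The loss cannot be recovered by sharpening your case analysis. In the range $2r\le d$, $\nu(C)$ is determined exactly by $r$ and $\tau(C)$ through the formula you quote, $\nu=(d-1)^2-r(d-1-r)-\tau$. So there the theorem is \emph{equivalent} to
\[
\mu(C)-\tau(C)\le r(d-1-r)-(d-2),
\]
and for $r=1$ this forces $\mu=\tau$. This is a real constraint linking the global invariant $\md(f)$ to how far the singularities are from being weighted homogeneous. Nothing in du Plessis--Wall numerics, the self-duality of $N(f)$, or bounds of the form $\dim N(f)_j\le\tau_{\max}(d,r)-\tau(C)$ involves $\mu-\tau$ at all. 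The obstacle you flagged, the exact formula for $\nu$ when $r\ge d/2$, is therefore not the real one.

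What is missing is an estimate of $N(f)$ in terms of the topology of $U$ rather than in terms of $\tau$. The paper gets this from Walther's theorem:
\[
\dim N(f)_{2d-3-q}\le\dim \mathrm{Gr}^1_F H^2(F,\CC)_{\lambda_q}\le\chi(U)+\dim H^1(F,\CC)_{\lambda_q}.
\]
The real work is then to show that the error term $H^1(F,\CC)_{\lambda_q}$ cannot be nonzero for every $q\in[\md(f),d-\md(f)]$. This uses four ingredients:
\begin{enumerate}
\item the identification of these eigenspaces with divergence-free syzygies $K_{q-2}(f)\oplus K_{d-q-2}(f)$;
\item the fact that in this range such syzygies are multiples $g\rho$ of the minimal syzygy;
\item the obstruction Lemma~\ref{lemm; keyobstruction}, which rules out three solutions of $D_\rho(g)+\diver(\rho)g=0$ in degrees $\alpha$, $L-1$, $L$;
\item the prime-power vanishing of Proposition~\ref{prop; primeordervanishing}, which forces $\md(f)>3(d-1)/7$.
\end{enumerate}
Your proposal contains no substitute for any of this.
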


In particular, if $C$ is a rational  with unibranched singularities, then  
$\chi(\PP^2 \setminus C)= 1$.     
Consequently  
\begin{coro}
\label{coro; conjectureproved}
Conjecture~\ref{conj; rationalunibranched} is true.
\end{coro}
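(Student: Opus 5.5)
The plan is to specialize Theorem~\ref{theo; main} to the curves in Conjecture~\ref{conj; rationalunibranched} and then read off the answer from the stated characterization of free and nearly free curves in terms of the freeness defect $\nu(C)$.

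First, I will check that the hypotheses of Theorem~\ref{theo; main} are satisfied. Let $C$ be a reduced rational plane curve with only unibranched singularities. Being rational means that the normalization of $C$ is isomorphic to $\PP^1$. Since $\PP^1$ is connected and the normalization of a reduced curve is the disjoint union of the normalizations of its irreducible components, $C$ has exactly one irreducible component. Hence $C=V(f)$ with $f$ reduced and irreducible. The degree-one case, a line, is trivially free and is in any case excluded by the standing convention $d\geq 2$, so we may assume $d\geq 2$.

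Next, I will compute the right-hand side of the inequality in Theorem~\ref{theo; main}. Rationality gives geometric genus $g=0$. Unibranchedness gives $b_p=1$ for every $p\in\Sing C$, so each summand $b_p-1$ vanishes. Therefore
\[
\chi(\PP^2\setminus C)=1+2\cdot 0+\sum_{p\in\Sing C}(1-1)=1,
\]
and Theorem~\ref{theo; main} yields $\nu(C)\leq 1$.

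Finally, the freeness defect is a nonnegative integer by Definition~\ref{defn; freedefect}, so $\nu(C)\in\{0,1\}$. By the characterization recalled in the introduction, $\nu(C)=0$ holds exactly when $C$ is free, and $\nu(C)=1$ holds exactly when $C$ is nearly free. Hence $C$ is free or nearly free, which is the content of Conjecture~\ref{conj; rationalunibranched}.

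There is no real obstacle at this step: all of the substance lies in Theorem~\ref{theo; main} itself. The only points needing care are the reduction from "reduced rational" to "irreducible", and the fact that $\nu(C)$ takes nonnegative integer values, which makes the equivalences $\nu=0\Leftrightarrow$ free and $\nu=1\Leftrightarrow$ nearly free exhaust the possibilities.
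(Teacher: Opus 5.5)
Your proposal is correct and follows the paper's route exactly: you specialize Theorem~\ref{theo; main} with $g=0$ and $b_p=1$ to get $\chi(\PP^2\setminus C)=1$, hence $\nu(C)\leq 1$, and conclude via the criterion \eqref{eq; criterionfreeandnearlyfree}. Your added remarks on irreducibility and on the degree-one case are harmless refinements that the paper leaves implicit; irreducibility also follows from unibranchedness, since two components would meet at a point carrying at least two branches.
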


The theorem also applies to rational nearly unibranched curves, namely
irreducible rational curves whose singularities are unibranched except
for one point with two branches. In this case
$\chi(\PP^2\setminus C)=2$, so $\nu(C)\leq 2$. 
This settles\cite[Conjecture~1.4]{MR4117559}:
\begin{coro} 
Every rational nearly unibranched plane curve $C\subset\PP^2$
satisfies $\nu(C)\leq 2$.
\end{coro}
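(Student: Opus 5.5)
The corollary follows directly from Theorem~\ref{theo; main}: it suffices to compute $\chi(\PP^2\setminus C)$ for a rational nearly unibranched curve and check that it equals $2$. No further properties of the freeness defect are needed beyond the inequality $\nu(C)\leq\chi(\PP^2\setminus C)$ already established there.

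\textbf{Step 1: the topological Euler characteristic of the complement.} I will first confirm the formula $\chi(\PP^2\setminus C)=1+2g+\sum_{p}(b_p-1)$ stated in the theorem. By additivity of the Euler characteristic for complex algebraic varieties, $\chi(\PP^2\setminus C)=\chi(\PP^2)-\chi(C)=3-\chi(C)$.

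Let $\nu:\widetilde C\to C$ be the normalization, where $\widetilde C$ is a smooth compact Riemann surface of genus $g$. The map $\nu$ is an isomorphism over $C\setminus\Sing C$. Over a singular point $p$, the fibre consists of exactly $b_p$ points, one for each local analytic branch. Comparing Euler characteristics fibrewise gives
\[
\chi(C)=\chi(\widetilde C)-\sum_{p\in\Sing C}(b_p-1)=2-2g-\sum_{p\in\Sing C}(b_p-1).
\]
Hence $\chi(\PP^2\setminus C)=1+2g+\sum_{p}(b_p-1)$, as claimed.

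\textbf{Step 2: specialization.} Now take $C$ rational and nearly unibranched.
\begin{itemize}
\item Rationality gives $\widetilde C\cong\PP^1$, so $g=0$.
\item Every singular point is unibranched, so $b_p=1$, except for a single point $p_0$ with $b_{p_0}=2$.
\end{itemize}
The sum $\sum_{p}(b_p-1)$ therefore equals $1$, and $\chi(\PP^2\setminus C)=1+0+1=2$.

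\textbf{Step 3: conclusion.} The curve $C$ is reduced and irreducible, since it is by definition an irreducible rational curve. Theorem~\ref{theo; main} therefore applies and gives $\nu(C)\leq 2$, which settles \cite[Conjecture~1.4]{MR4117559}.

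The same computation with all $b_p=1$ gives $\chi(\PP^2\setminus C)=1$, hence $\nu(C)\leq1$. This is Corollary~\ref{coro; conjectureproved}, because $\nu(C)\in\{0,1\}$ means exactly that $C$ is free or nearly free.

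There is no real obstacle here: all the depth lies in Theorem~\ref{theo; main}. The only point needing care is the Euler characteristic bookkeeping at the singular points. One must justify the comparison $\chi(C)=\chi(\widetilde C)-\sum_p(b_p-1)$, which holds because $\nu$ is a homeomorphism off the finite set $\Sing C$ and has finite fibres of size $b_p$ over it.
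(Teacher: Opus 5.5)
Your proposal is correct and follows the paper's own argument. You compute $\chi(\PP^2\setminus C)=1+2g+\sum_p(b_p-1)=2$ via the normalization, using $g=0$ and a single point with $b_{p_0}-1=1$, and then apply the inequality $\nu(C)\leq\chi(\PP^2\setminus C)$ of Theorem~\ref{theo; main}, exactly as the paper does. Your fibrewise justification of $\chi(C)=\chi(\widetilde C)-\sum_p(b_p-1)$ is a slightly more explicit version of the paper's remark, though writing $\nu$ for the normalization map clashes with the freeness defect $\nu(C)$.
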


We briefly sketch the idea of the proof of Theorem~\ref{theo; main}. 
Assuming the existence of a reduced irreducible curve $C$   satisfying 
$\nu(C)>\chi(\PP^2\setminus C)$, we will establish some numerical constraints on the minimal syzygy degree $\md(f)$ and then derive a contradiction. 

We begin in \S\ref{sec; Jacobiansyzygy} by reviewing some basic
properties of Jacobian syzygies. The first numerical constraint
Proposition~\ref{prop; maximumnu} provides an interval of integers $m$ on which
$\dim_\CC N(f)_m$ attains its maximal value $\nu(C)$, under the
assumption $2\md(f)<d$.

In \S\ref{sec; monodromy}, we adapt the monodromy argument of
\cite{DS18b} and
\cite{Wal17} on the global Milnor fiber $F=\{f=1\}$ to obtain the second numerical constraint 
Proposition~\ref{prop; EulercharboundA}. This relates the  
dimensions of $N(f)_{2d-3-q}$ to the dimensions of the eigenspaces
$H^1(F,\CC)_{\lambda_q}$, where  
$\lambda_q=\exp(-2\pi iq/d)$.
This relation then forces $2\md(f)< d$, and therefore Proposition~\ref{prop; maximumnu} applies. Consequently    
\[
\nu(C)\leq
\chi(\PP^2\setminus C)+\dim_\CC H^1(F,\CC)_{\lambda_q}
\text{ for every }\md(f)\leq q\leq d-\md(f).
\]

Since $\nu(C)>\chi(\PP^2\setminus C)$, $H^1(F,\CC)_{\lambda_q}\neq0$ throughout this range.
A detailed analysis of the graded syzygy expression o $H^1(F,\CC)$ then yields, for each such $q$,
a nonzero homogeneous polynomial solution of a differential
equation (see \eqref{eq; ODE}). 

The most essential ingredient in this paper is then Lemma~\ref{lemm; keyobstruction}, 
which rules out the existence of three nonzero homogeneous polynomial solutions of this differential equation whose degrees satisfy
certain numerical constraints.  However, a careful choice of three values of $q$ 
 produces three nonzero solutions whose
degrees satisfy precisely these constraints. 
This then gives the desired contradiction.

\vskip .1in 
\noindent
{\bf Acknowledgement}:
The author is grateful to Zhenjian Wang and Professor Alexandru Dimca for reading the first draft and providing helpful comments.

\vskip .1in
\noindent
{\bf AI Disclosure}: 
This work was originated in purely silicon-based  forms. The author had been using an earlier version of a large language model (LLM) to study the progress on Dimca and Sticlaru's free/nearly free conjecture, and had been discussing the problem with the model.

The next day when the new version of the LLM became available, the author supplied it with a record of the previous discussions and the prompt `keep trying.' The new model then produced a proof of this conjecture. After studying and understanding that proof, the author  unified the  case discussions of the original proof and  reformulated it into the present form. This presented paper is written completely by human  and the author is responsible for the correctness of this paper.

\section{Jaobian Syzygies and the First Numerical Constraints}
\label{sec; Jacobiansyzygy}
Let $S=\CC[x,y,z]=\oplus_{j\geq 0} S_j$ be the coordinate ring of $\CC^3$ graded by homogeneous degree. For any polynomial $f\in S$  we denote by $J_f:=(f_x, f_y, f_z)$ the Jacobian ideal of $f$. Throughout this paper we will assume that $f\in S_d$ is an irreducible homogeneous polynomial of degree $d\geq 2$.

By a Jacobian syzygy of $f$, or a syzygy of the Jacobian ideal $J_f$, we mean a triple $(A, B,C)\in S^3$ such that $Af_x+Bf_y+Cf_z=0$. 
The  set of all Jacobian syzygies of $f$, denoted by $\AR(f)$, is a graded submodule of $S^3$:
\[
 \AR(f)=\bigoplus_{j\geq 0} \AR(f)_m \text{ where } 
 \AR(f)_m:=
 \{(A,B,C)\in S_m^3\vert \ Af_x+Bf_y+Cf_z=0\} \/.
 \]
We will call $\md(f):=\min\{m\vert \AR(f)_m\ne0\}$ the minimal Jacobian syzygy degree of $f$. 

Let $I_f:=(J_f:\fm^\infty)$ be the saturation of $J_f$ with respect to the maximal ideal $\fm=(x,y,z)$. The quotient  $N(f):=I_f/J_f$ is also a graded $S$-module and we write $N(f)_m$ for its degree $m$ piece.  

\begin{defi}[{\cite[Definition~2.4]{DS18a}}]
\label{defn; freeandnearlyfree}
Let $C=V(f)\subset \PP^2$ be a reduced irreducible plane curve.
\begin{enumerate}
	\item We say 	$C$ is a free curve if $\AR(f)$ is a free $S$-module of rank two.
    \item We say $C$ is nearly free if $N(f)\neq 0$ and 
$\dim_\CC N(f)_m\leq 1$ for every $m$.
\end{enumerate} 
\end{defi}
\begin{rema}
We note that the word `free' may cause some confusion across references. Here by a free curve we actually mean that the affine cone $\tilde{C}$ is a free divisor in $\CC^3$ in Saito's sense \cite{KSaito80}. The projective curve $C$ is always a free divisor, though it may not be a free curve.
\end{rema}

Following \cite[\S 2]{Dim24}, we make the following definition.
\begin{defi}
\label{defn; freedefect}
We call  $\nu(C):=\max \{ \dim_{\CC}N(f)_m\ \vert \  m\in\ZZ \}$  the
 freeness defect of $C$ (or of $f$).
\end{defi}
The module $\AR(f)$ is free if and only if $J_f$ is $\fm$-saturated, i.e., $N(f)=0$ (see \cite[Proposition 2.3]{DS18a}). On the other hand, 
$C$ is nearly free if and only if    $N(f)\neq 0$ and $\nu(C)\leq 1$. Thus we have the following numerical criterion:
\begin{equation}
\label{eq; criterionfreeandnearlyfree}
C\text{ is free}\iff\nu(C)=0,
 \qquad
 C\text{ is nearly free}\iff\nu(C)=1 \/.	
\end{equation}

The first numerical constraint comes from  \cite[Corollary~4.3]{DP16}, which states that for a generic linear form $l\in S_1$, the left multiplication 
\[
L_l\colon N(f)_m\longrightarrow N(f)_{m+1}\/,
\quad  [g]\mapsto [l\cdot g]
\]
is injective for $m< \frac{3d-6}{2}$ and surjective for $m\geq \left\lfloor \frac{3d-6}{2} \right\rfloor$. Consequently we have 
\begin{equation}
\label{eq; centralmaximum}
 \nu(C)=\dim_{\CC} N(f)_{k_0} \text{ for } k_0: =\left\lfloor\frac{3d-6}{2}\right\rfloor.
\end{equation}

Now we fix a  nonzero homogeneous Jacobian syzygy 
$\rho\in \AR(f)_{\md(f)}$ of minimal  degree $\md(f)$. We recall from \cite[Thm. 3.2]{DS20} the following proposition. 
\begin{prop}
\label{prop; maximumnu}
All the homogeneous Jacobian syzygies of low degrees are multiples of $\rho$:  
\begin{equation}
\label{eq; minimalsyzygy}
\AR(f)_m=S_{m-\md(f)}\cdot \rho \text{ for any } m<d-\md(f)-1 \/.
\end{equation}
If moreover we assume that $2\md(f)<d$, then  for any  $d+\md(f)-3\leq m \leq 2d-\md(f)-3$,  
\begin{equation}
\label{eq; maximalrangenu}
\nu(C)=\dim_{\CC} N(f)_{m}  \/.
\end{equation}
\end{prop}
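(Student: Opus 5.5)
The plan is to prove the two assertions separately. The first is an elementary cross-product argument. The second combines the Lefschetz-type property \eqref{eq; centralmaximum} from \cite{DP16} with the self-duality of $N(f)$ and a Hilbert function count.

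\emph{Low-degree syzygies.} First I would note that the three components of $\rho$ have no common factor. Otherwise, dividing by the common factor would give a nonzero syzygy of degree smaller than $\md(f)$. Now let $\rho'\in\AR(f)_m$ with $m<d-\md(f)-1$, and consider the cross product $\rho\times\rho'\in S^3_{\md(f)+m}$. Both $\rho$ and $\rho'$ are orthogonal to $\nabla f=(f_x,f_y,f_z)$, so $\rho\times\rho'$ is proportional to $\nabla f$ over the fraction field. Since $f$ is reduced, $\gcd(f_x,f_y,f_z)=1$, and therefore $\rho\times\rho'=h\,\nabla f$ for some polynomial $h$ of degree $\md(f)+m-(d-1)$.

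If $h\neq 0$, then $\md(f)+m\geq d-1$, which contradicts the hypothesis on $m$. Hence $\rho\times\rho'=0$, so $\rho'$ is proportional to $\rho$ over the fraction field. Because the entries of $\rho$ are coprime, this forces $\rho'\in S_{m-\md(f)}\rho$, which gives \eqref{eq; minimalsyzygy}.

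\emph{The plateau of $\dim N(f)_m$.} By \cite[Corollary~4.3]{DP16}, $m\mapsto\dim N(f)_m$ is nondecreasing for $m\le k_0$ and nonincreasing afterwards. I would also use the known duality $\dim N(f)_m=\dim N(f)_{3d-6-m}$, which is the self-duality of $N(f)$ as the graded dual of a local cohomology module (Sernesi; Dimca--Sticlaru). The interval $[d+\md(f)-3,\,2d-\md(f)-3]$ is symmetric about $(3d-6)/2$, and it is nonempty exactly when $2\md(f)\le d$. By unimodality and symmetry, it therefore suffices to show
\[
\dim N(f)_{d+\md(f)-3}=\dim N(f)_{k_0},
\]
that is, that $L_l$ is bijective on $N(f)_m$ for $d+\md(f)-3\le m<k_0$.

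For this I would write $m=j+d-1$ and use the exact sequence
\[
0\to\AR(f)_j\to S_j^3\to S_{j+d-1}\to (S/J_f)_{j+d-1}\to 0
\]
together with $\dim(S/J_f)_k=\dim(S/I_f)_k+\dim N(f)_k$. This expresses $\dim N(f)_{j+d-1}$ through $\dim\AR(f)_j$, binomial coefficients, and the Hilbert function of $S/I_f$. In the relevant range $j\ge\md(f)-2$, I would control $\AR(f)_j$ by the first part (it equals $S_{j-\md(f)}\rho$ as long as $j<d-\md(f)-1$) and by the sheaf $T\langle C\rangle$ of logarithmic vector fields. That sheaf is a rank $2$ bundle on $\PP^2$ with a section vanishing in codimension $2$, coming from $\rho$, and its $H^1$ computes $N(f)$. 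Twisting the associated Koszul-type sequence
\[
0\to\cO\xrightarrow{\rho}T\langle C\rangle(\md(f))\to\cI_Z(2\md(f)-d+1)\to 0
\]
should show that $H^1(T\langle C\rangle(k))$ has constant dimension $\deg Z-(\text{corrections})$ once $2\md(f)-d+1+k$ lies in the range where $\cI_Z$ has no $H^0$ and no $H^2$. Under $2\md(f)<d$, this range is exactly the plateau. I expect identifying this range precisely with $d+\md(f)-3\le m\le 2d-\md(f)-3$, including the shift by $d-1$ between $\AR$ and $N$, to be the main obstacle.
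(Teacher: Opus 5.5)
Your proof of \eqref{eq; minimalsyzygy} is correct and is the paper's cross-product argument. The gap is in the second assertion: as you acknowledge, nothing in the proposal shows that $\dim_\CC N(f)_m$ is constant on $[d+\md(f)-3,\,2d-\md(f)-3]$.

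The Hilbert-function count cannot do this on its own. For $m=j+d-1$ in this interval one has $\md(f)-2\le j\le d-\md(f)-2$, so \eqref{eq; minimalsyzygy} already makes $\dim(S/J_f)_m$ explicit. Extracting $\dim N(f)_m$ from it would still require the Hilbert function of $S/I_f$ in these degrees, which is no easier than the claim itself.

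Your sheaf sketch puts the vanishing on the wrong terms. Asking that $\cI_Z(n)$ have no $H^0$ and no $H^2$ does not give constancy. If $n\ge0$ and $H^0(\cI_Z(n))=0$, then $h^1(\cI_Z(n))=\deg Z-\binom{n+2}{2}$, which varies with $n$. Moreover $H^2(\cI_Z(n))=H^2(\cO(n))$ vanishes iff $n\ge-2$. For $n=m+\md(f)-2d+2$ this means $m\ge 2d-\md(f)-4$, so it sits at the upper end of the interval and does not produce the lower end. There is also a twist slip: if $T\langle C\rangle$ is the sheaf of logarithmic vector fields, $\Der_{\PP^2}(-\log C)=\widetilde{\AR(f)}(1)$, then $\rho$ is a section of $T\langle C\rangle(\md(f)-1)$, not $T\langle C\rangle(\md(f))$. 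Your cokernel $\cI_Z(2\md(f)-d+1)$ is the correct one for the former.

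The missing step is short.
\begin{enumerate}[label=(\arabic*)]
\item Write $N(f)_m=H^1\bigl(\widetilde{\AR(f)}(m-d+1)\bigr)$ and twist $0\to\cO\to\widetilde{\AR(f)}(\md(f))\to\cI_Z(2\md(f)-d+1)\to0$ by $k=m-d+1-\md(f)$.
\item We have $H^1(\cO(k))=0$ for all $k$, and $H^2(\cO(k))=0$ for $k\ge-2$, i.e.\ for $m\ge d+\md(f)-3$. In that range $N(f)_m\cong H^1(\cI_Z(m'))$ with $m'=m+\md(f)-2d+2$.
\item Use $0\to\cI_Z(m')\to\cO(m')\to\cO_Z(m')\to0$. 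For $m'<0$, i.e.\ $m\le 2d-\md(f)-3$, we have $H^0(\cO(m'))=H^1(\cO(m'))=0$. Hence $N(f)_m\cong H^0(\cO_Z)$, of dimension $\deg Z$, independent of $m$.
\end{enumerate}
So the lower end of the interval comes from $H^2$ of the line-bundle term, and the upper end from $H^0(\cO(m'))$; neither comes from vanishing on $\cI_Z$.

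The interval is symmetric about $(3d-6)/2$ and has length $d-2\md(f)\ge1$, so it contains $k_0$. Then \eqref{eq; centralmaximum} gives $\nu(C)=\deg Z=\dim_\CC N(f)_m$ on the whole interval. With this, the self-duality of $N(f)$, the bijectivity of $L_l$ below $k_0$, and the Hilbert-function count are unnecessary.
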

   
\begin{proof}
Write $\rho=(A,B,C)$. The minimality of $\md(f)$ implies
that $\gcd(A,B,C)=1$.

Let $K=\textup{Frac}(S)=\CC(x,y,z)$ be the field of rational functions. For any homogeneous syzygy $\rho'=(A',B',C')\in\AR(f)_{m}$, since 
$\rho\cdot \nabla_f=\rho'\cdot \nabla_f=0$ we have
$\rho\times \rho'=\frac{P}{Q} \nabla_f$ for some $P,Q\in S$ such that $\gcd (P, Q)=1$. Then $Q$ necessarily divides $\nabla_f$. However we have assumed $f$ to be irreducible, thus $\gcd(f_x,f_y,f_z)=1$ and therefore $Q\in \CC^*$.

Comparing degrees we have $\deg P=\md(f)+m-d+1$. Thus if $m< d-1-\md(f)$ then $P=0$ and $\rho'=\frac{P'}{Q'}\cdot \rho$. Since  $\gcd(A, B,C)=1$, we have $Q'\in \CC^*$ and   $\rho'\in S\cdot \rho$. This proves \eqref{eq; minimalsyzygy}, which was also used
in \cite[Proposition~3.4]{DS18b}.

For the second statement we adapt the Bourbaki scheme argument from \cite{DP16} and \cite{JNS24}.
Recall that  $\widetilde{\AR(f)}$ is the twisted sheaf of logarithmic derivations $\Der_{\PP^2}(-\log C)(-1)$. This is a locally free $\cO_{\PP^2}$-sheaf of  rank $2$. Then  by \cite[\S~3]{DS14} and \cite[Theorem 4.1]{DP16} we have
\[
\AR(f)_m=H^0\bigl(\PP^2, \Der_{\PP^2}(-\log C)(m-1)\bigr)\/; \quad 
N(f)_m= H^1 \bigl(\PP^2,\Der_{\PP^2}(-\log C)(m-d)\bigr) \/.
\]

The minimal syzygy $\rho$ induces a section $\tilde{\rho}\in H^0\bigl(\PP^2, \Der_{\PP^2}(-\log C)(\md(f)-1)\bigr)$ whose zero subscheme $Z$ has dimension $\leq 0$. This is the Bourbaki scheme of $\rho$.  
Recall that   $H^1\bigl(\PP^2, \cO(k)\bigr)=0$ for any 
$k\in \ZZ$ and $H^2\bigl(\PP^2, \cO(k)\bigr)=H^0\big(\PP^2, \cO(-k-3)\bigr)^\vee=0$ 
for any $k\geq -2$ by Serre duality. Thus when $m\geq d+\md(f)-3$ we have
 \[
 H^1\bigl(\PP^2, \cO(m-d+1-\md(f))\bigr)
 =H^2\bigl(\PP^2, \cO(m-d+1-\md(f))\bigr)=0 \/.
 \] 
From the canonical short exact sequence
\[
 0\longrightarrow\cO_{\PP^2}(1-\md(f))\longrightarrow
 \Der_{\PP^2}(-\log C)
 \longrightarrow\cI_Z(\md(f)-d+2)\longrightarrow0 
\]
we have, when $d+\md(f)-3\leq m$ ,  
\[
 H^1\bigl(\PP^2, \Der_{\PP^2}(-\log C)(m-d)\bigr)\simeq H^1\bigl(\PP^2, \cI_Z(m+\md(f)-2d+2)\bigr) \/.
\]
By the standard exact sequence of the   subscheme $Z$, where we set $m'=m+\md(f)-2d+2$
\[
 0\longrightarrow\cI_Z(m')\longrightarrow
 \cO_{\PP^2}(m')
 \longrightarrow\cO_Z(m')\longrightarrow0 
\]
we have, when $m'<0$, i.e., $m \leq 2d-\md(f)-3$,  
\[
H^1\bigl(\PP^2, \cI_Z(m')\bigr)\simeq 
H^0\bigl(Z, \cO_Z(m') \bigr)
\simeq H^0\bigl(Z, \cO_Z \bigr) \text{ is independent of } m \/.
\]

Finally, under the assumption $2\md(f)\leq d$ we have the interval 
$[d+\md(f)-3,\  2d-\md(f)-3]$ 
which is centered at $(3d-6)/2$. In particular this interval contains $k_0$. This completes the proof.
\end{proof}

\section{Monodromy and the Second Numerical Constraints}
\label{sec; monodromy}
Since $f$ is homogeneous of degree $d$, its local Milnor fibration at the origin extends to the global fibration $f\colon \CC^3\setminus V(f)\longrightarrow \CC^*$. We call $F:=f^{-1}(1)$ the global Milnor fiber of $f$.
By \cite[Chapter~3, \S1]{DimcabookSingofHypersurface}, for $0<|\eta|\ll \epsilon\ll 1$ sufficiently small, we have a diffeomorphism 
\[
F_{f,0}:=B^\circ_\epsilon(0)\cap f^{-1}(\eta)\cong F
\]
between the local and global Milnor fibers. 

Let $\pi\colon F\rightarrow U:=\PP^2\setminus C$  be the restriction of the quotient map
$\CC^3\setminus\{0\}\rightarrow \PP^2$. This is a $d$-fold cyclic covering map with 
geometric monodromy action  
\[
T\colon F \longrightarrow F\/; \quad (x,y,z)\mapsto \exp(2\pi i/d)\cdot (x,y,z)
\]
Let $T_i:=T^*\colon H^i(F,\CC)\to H^i(F,\CC)$ denote the
 cohomological monodromy action. For each $0\le q<d$ we set   
 $\lambda_q=\exp(-2\pi iq/d)$ and denote the  corresponding eigenspace by
 \[
H^i(F,\CC)_{\lambda_q}:=\ker \left[ T_i-\lambda_q \textup{Id}\colon H^i(F, \CC)\to H^i(F, \CC)\right] \/.
\]
 
Since $T_i^d=\operatorname{Id}$, the operator $T_i$ is semisimple,
and hence
\[
H^i(F,\CC)
=
\bigoplus_{q=0}^{d-1} H^i(F,\CC)_{\lambda_q},
\quad
\dim_\CC H^i(F,\CC)_{\lambda_q}=a^i_q,
\]
where  $a^i_q$ denotes the
multiplicity of $\lambda_q$ in the characteristic
polynomial of $T_i$. When $i=1$, the characteristic
polynomial of $T_1$ is called the Alexander polynomial of $C$ and is denoted by $\Delta_C(t)$.

 Since $\pi$ is a finite covering,
$R^j\pi_*\CC_F=0$ for $j>0$ and $\pi_*\CC_F$ is a local
system of rank $d$. Define
\[
\cL_q
:=
\ker\bigl(
\cT-\lambda_q\operatorname{Id}
\colon \pi_*\CC_F\longrightarrow\pi_*\CC_F
\bigr) \/,
\]
where $\cT$ is  induced  by the geometric monodromy $T$. Each $\cL_q$ is a rank-one local system on $U$ 
and we have a decomposition
\[
\mathrm{R}\pi_*\CC_F
\simeq
\pi_*\CC_F
\simeq
\bigoplus_{q=0}^{d-1}\cL_q.
\]
Taking cohomology we obtain natural isomorphisms (see \cite[Proposition~6.4.6]{DimcabookSheavesinTopology}):
\[
H^i(F,\CC)_{\lambda_q}
\simeq
H^i(U,\cL_q),
\qquad 0\leq q\leq d-1.
\] 

The second numerical constraints consist of the following properties. 
\begin{prop}
\label{prop; EulercharboundA}
For every $1\leq q< d$ we have 
\begin{equation}
\label{eq; EulercharboundA}
 \dim_\CC N(f)_{2d-3-q}\leq \chi(U)+ \dim_\CC H^1(F, \CC)_{\lambda_q}.
\end{equation}
\end{prop}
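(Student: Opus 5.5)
The plan is to reduce the inequality to a statement about the second cohomology of the Milnor fiber, and then to embed $N(f)_{2d-3-q}$ into that cohomology using the pole order spectral sequence.

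\textbf{Step 1: reduction to $H^2$.} Since $1\leq q<d$, the local system $\cL_q$ is nontrivial. Hence $H^0(U,\cL_q)=0$, so $H^0(F,\CC)_{\lambda_q}=0$. Rank one local systems on $U$ all have Euler characteristic $\chi(U,\cL_q)=\chi(U)$. The isomorphisms $H^i(F,\CC)_{\lambda_q}\simeq H^i(U,\cL_q)$ then give
\[
\dim_\CC H^2(F,\CC)_{\lambda_q}=\chi(U)+\dim_\CC H^1(F,\CC)_{\lambda_q}.
\]
The affine surface $U$ has no cohomology of $\cL_q$ above degree $2$. The statement is therefore equivalent to
\[
\dim_\CC N(f)_{2d-3-q}\leq \dim_\CC H^2(F,\CC)_{\lambda_q},
\]
and the task is to realize $N(f)_{2d-3-q}$, or a space of the same dimension, as a subquotient of $H^2(F,\CC)_{\lambda_q}$.

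\textbf{Step 2: duality and syzygies.} First use the self-duality of $N(f)$, namely $N(f)_k\simeq N(f)_{3d-6-k}^\vee$. This gives $\dim N(f)_{2d-3-q}=\dim N(f)_{d-3+q}$. Next use the identification of $N(f)$ with the quotient $\AR(f)/\mathrm{KR}(f)$, up to a shift by $d-2$. Here $\mathrm{KR}(f)$ is the submodule of Koszul syzygies generated by $(f_y,-f_x,0)$ and its analogues. This should identify $N(f)_{d-3+q}$ with $(\AR(f)/\mathrm{KR}(f))_{q-1}$. I will check the exact shift against \cite{DS18a} and \cite{Dim24}.

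Via contraction with the Euler field, these syzygies are homogeneous $2$-forms $\omega$ with $df\wedge\omega=0$, taken modulo $df\wedge\Omega^1$. This is exactly the $E_2$-term of the pole order spectral sequence of \cite{Wal17} and \cite{DS18b}, in the homogeneous degree $q+\ell d$ that maps to the $\lambda_q$-eigenspace of $H^2(F,\CC)$. The spectral sequence converges to $\gr_P H^*(F,\CC)$ and respects the eigenspace decomposition.

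\textbf{Step 3: degeneration (main obstacle).} The hard part is to show that this $E_2$-piece survives to $E_\infty$, or at least that no dimension is lost beyond what is permitted. There are two possible losses:
\begin{itemize}
\item differentials $d_r$ leaving it, whose targets are $3$-form terms;
\item differentials $d_r$ entering it from $1$-form terms.
\end{itemize}
Walther's comparison \cite{Wal17} and \cite[\S3]{DS18b} show that for syzygies of degree $<d$ (here degree $q-1<d$), the outgoing differentials vanish. The reason is that the class is represented by a form whose pole order is already minimal, so $d_r(\omega)$ lands in a zero graded piece.

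For incoming differentials, I would first try to show they vanish. The sources are $1$-form classes, and in the relevant degree these are governed by $H^1(F)_{\lambda_q}$, or are zero for degree reasons. Should incoming maps survive, the fallback is weaker. The image of $N(f)_{d-3+q}$ in $\gr_P H^2(F,\CC)_{\lambda_q}$ then has dimension at least $\dim N(f)_{d-3+q}$ minus the rank of the incoming maps. Since the target is $\dim H^2=\chi(U)+h^1$, that loss must be absorbed into the $h^1$ term, and I would still have to verify that it can be.

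I expect the careful bookkeeping of degrees in the spectral sequence to be the main technical point. The Euler characteristic reduction in Step 1 is formal.
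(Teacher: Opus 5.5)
Your Step 1 is correct and matches the paper's Euler characteristic argument exactly. The substance of the proposition, however, is the inequality $\dim_\CC N(f)_{2d-3-q}\le\dim_\CC H^2(F,\CC)_{\lambda_q}$, and there your proposal has a genuine gap. The paper does not derive this from the pole order spectral sequence. It quotes Walther's comparison theorem \cite[Theorem~4.3]{Wal17}, proved by $\mathcal{D}$-module methods, which gives the stronger bound $\dim_\CC N(f)_{2d-3-q}\le\dim\gr^1_F H^2(F,\CC)_{\lambda_q}$ for the Hodge filtration. It then concludes with your Step 1. Your Steps 2--3 try to re-derive this bound, and they do not succeed.

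Concretely, there are three problems.

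\emph{(i) The identification of $N(f)$ with $\AR(f)/\mathrm{KR}(f)$ up to shift is false.} In your range $q-1<d-1$ one has $\mathrm{KR}(f)_{q-1}=0$, so the claim would read $\dim N(f)_{d-3+q}=\dim\AR(f)_{q-1}$. For a free curve with exponents $d_1\le d_2$, take $q=d_1+1$: the left side is $0$ and the right side is not. More generally, for singular $C$, $N(f)$ has finite length, while $\AR(f)/\mathrm{KR}(f)$ has dimension $\tau(C)$ in high degrees.

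\emph{(ii) The cohomological degree is off by one.} In the spectral sequence $E_1^{s,t}(f)_m=H^{s+t+1}(K_f^\bullet)_{td+m}$, the $2$-form classes $H^2(K_f^\bullet)$ compute $H^1(F,\CC)$, not $H^2(F,\CC)$; this is how the paper uses them. $H^2(F,\CC)$ is built from $H^3(K_f^\bullet)$, i.e.\ from the Milnor algebra $M(f)=S/J_f$, and that is where $N(f)=I_f/J_f$ lives: $N(f)_{2d-3-q}\subset M(f)_{2d-3-q}\cong H^3(K_f^\bullet)_{2d-q}$. Your description of the differentials is therefore reversed. Differentials leaving the $H^2$-terms vanish trivially, since $H^4(K_f^\bullet)=0$. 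The $d_1$ leaving a syzygy class $[\omega_\theta]$ is $[\diver(\theta)\,\mathrm{d}x\wedge\mathrm{d}y\wedge\mathrm{d}z]$, which is generally nonzero; this is exactly why $K_m(f)$ enters the paper's computation of $H^1(F,\CC)_{\lambda_q}$.

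\emph{(iii) The real difficulty is left open.} In the correct picture one must show that $N(f)_{2d-3-q}$ meets trivially the images of $d_1$ and of all higher $d_r$ arriving from the $H^1$-terms. Step 3 does not do this, and there is no general $E_2$-degeneration result to invoke. Your fallback also fails: $h^1$ is already used up in $\dim H^2(F,\CC)_{\lambda_q}=\chi(U)+h^1$. A loss of rank $\delta$ therefore only yields $\dim N(f)_{2d-3-q}\le\chi(U)+h^1+\delta$.

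To repair the proof, replace Steps 2--3 by Walther's theorem; your Step 1 then finishes it.
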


\begin{proof}
Since $F$ is a smooth complex affine variety, the cohomology group
$H^i(F,\CC)$ carries Deligne's canonical mixed Hodge structure
(see \cite{Del71}\cite{Del74}). Moreover the  monodromy action $T$   on $H^i(F,\CC)$ is
compatible with  the Hodge
filtration $F^\bullet H^i(F,\CC)$ (see \cite[\S4, Steps~3--4]{Wal17}).
Let $F^\bullet H^i(F,\CC)_{\lambda_q}$ be the induced Hodge filtration on each eigenspaces.
 
From \cite[Theorem~4.3]{Wal17} we have
\[
\dim_\CC N(f)_{2d-3-q}
 \leq \dim \textup{Gr}_{F^\bullet}^1 H^2(F,\CC)_{\lambda_q}:=\dim_\CC  \frac{F^1 H^2(F,\CC)_{\lambda_q}}{F^2 H^2(F,\CC)_{\lambda_q}}
 \/.
 \]

Since $U$ is  complex algebraic, for  the local system $\cL_q$ we   have
\[
\chi(U, \cL_q)=\rk (\cL_q)\cdot \chi(U, \CC_U)=\chi(U) \/.
\] 

Since $F$ is connected, $H^0(U,\cL_0)\simeq H^0(F,\CC)\simeq\CC$ and thus $H^0(U,\cL_q)=0$ for
$1\leq q<d$. Then
\[
 \chi(U)=\dim_\CC H^2(U, \cL_q)-\dim_\CC H^1(U, \cL_q)
 =\dim_\CC H^2(F, \CC)_{\lambda_q}-\dim_\CC H^1(F, \CC)_{\lambda_q}.
\]
Combine with the previous inequality we then have
\[
\dim_\CC N(f)_{2d-3-q}
\leq \dim\gr^{F^\bullet}_1 H^2(F,\CC)_{\lambda_q}
\leq \dim_\CC H^2(F,\CC)_{\lambda_q}=\chi(U)+ \dim_\CC H^1(F, \CC)_{\lambda_q} \/. 
\qedhere
\]
 \end{proof}

We have  the following  Zariski type result from \cite[Proposition 2.1]{AD15}.
\begin{prop} 
\label{prop; primeordervanishing}
Assume that $C=V(f)\subset \PP^2$ is an irreducible reduced plane curve. If $\frac{d}{\gcd (d, q)}=p^k$ for some prime number $p$ and $k\geq 1$, i.e., $\lambda_q$ has order $p^k$, then $H^1(F,\CC)_{\lambda_q}= 0$. 
\end{prop}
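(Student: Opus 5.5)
The plan is to prove the vanishing by transporting the eigenspace to a finite abelian cover of $U$. The cover we want is controlled by $H_1(U)\simeq \ZZ/d$, and we then use the classical fact that an irreducible curve has no prime-power-order roots of its Alexander polynomial, in the spirit of Zariski and Libgober.

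\emph{Step 1: reduce to the Alexander polynomial.} Since $H^1(F,\CC)_{\lambda_q}\simeq H^1(U,\cL_q)$ and $H^1(F,\CC)$ is the first cohomology of the infinite cyclic/Milnor cover, it suffices to show that $\lambda_q$ is not a root of $\Delta_C(t)$. For irreducible $C$, the group $H_1(U,\ZZ)$ is $\ZZ/d$ and the covering $F\to U$ is the universal abelian cover. The known facts we will use are:
\begin{itemize}
\item $\Delta_C(1)=1$ by the Wang-sequence computation ($H^1(U,\CC)=0$ since $C$ is irreducible, so $H^1(F,\CC)_{1}=H^1(U,\CC)=0$);
\item $\Delta_C(t)$ has integer coefficients, since $T_1$ is defined over $\ZZ$, and divides $(t^d-1)^{N}$ for some $N$. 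Thus $\Delta_C(t)=\prod_{e\mid d,\,e>1}\Phi_e(t)^{m_e}$.
\end{itemize}

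\emph{Step 2: the prime-power cover.} Let $e=p^k$ and consider the intermediate cyclic cover $U_e\to U$ of degree $e$ corresponding to $\ZZ/d\to\ZZ/e$. Equivalently, $U_e=F/\langle T^{e}\rangle$, and $H^1(U_e,\CC)=\bigoplus_{\lambda^e=1}H^1(F,\CC)_\lambda$. The key input is the standard Fox-type lemma: for a $p$-group covering $X'\to X$ of finite CW complexes with $H_1(X,\FF_p)=0$, one also has $H_1(X',\FF_p)=0$. The proof goes by induction on $\ZZ/p$-steps, using the Smith/transfer inequality
\[
\dim H_1(X',\FF_p)\le p\cdot\dim H_1(X,\FF_p)
\]
for a $\ZZ/p$ cover, or alternatively the fact that $\FF_p[\ZZ/p]$ is local with nilpotent augmentation ideal. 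Apply this with $X=U$. We have $H_1(U,\FF_p)=\ZZ/d\otimes\FF_p$, which is not zero when $p\mid d$, so the lemma cannot be applied directly. Instead we apply it on $U_{e/p}$ versus $U_e$ after quotienting out the part of $\pi_1$ detected by $\ZZ/d$. Concretely, we work with the commutator subgroup $G'$ of $G=\pi_1(U)$. Then $H^1(F,\CC)=\operatorname{Hom}(G'/G'',\CC)$, and $G/G'=\ZZ/d$ acts. The eigenvalue $\lambda$ of order $p^k$ occurs iff $\Phi_{p^k}(t)$ divides the order ideal of the $\ZZ[t]/(t^d-1)$-module $M=G'/G''$. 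In that case $M\otimes\FF_p$ would have nontrivial coinvariants under $t$, since $\Phi_{p^k}(t)\equiv (t-1)^{\varphi(p^k)}\pmod p$. But $M/(t-1)M$ is finite of order $|\Delta_C(1)|=1$, hence zero. So $M/(t-1)M=0$, and by Nakayama over the local ring $\FF_p[t]/(t-1)^{p^k}$ we get that $(M\otimes\FF_p)_{\Phi_{p^k}}$ vanishes. This contradicts the presence of the root.

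\emph{Main obstacle.} The delicate step is the last one: making the passage from ``$M/(t-1)M=0$'' to ``$\Phi_{p^k}\nmid$ order ideal'' rigorous, since $M$ is only finitely generated over $\ZZ[t]$ and may have $\ZZ$-torsion. The approach I would try first is to compute $\Delta_C(t)\bmod p$ as a product of powers of $(t-1)$ times factors coprime to $t-1$, using $t^d-1\equiv (t^{d'}-1)^{p^a}$. Then one shows that $\Delta_C(1)=\pm1$ forces no factor $\Phi_{p^k}$. Indeed $\Phi_{p^k}(1)=p$, so any such factor would make $p\mid\Delta_C(1)$, while the other cyclotomic factors $\Phi_e$ with $e$ not a prime power have $\Phi_e(1)=1$.

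This last observation in fact gives a one-line argument. We have $\Delta_C(1)=\prod_e\Phi_e(1)^{m_e}=\prod_{p,k}p^{m_{p^k}}$. The normalization $\Delta_C(1)=1$ (equivalently $H^1(F,\CC)_1=0$ together with the integrality of $\Delta_C$ as the characteristic polynomial on $H^1(F,\ZZ)/\text{tors}$) then forces $m_{p^k}=0$. So the real work is justifying $\Delta_C(1)=\pm1$ as an integer identity, not just as the statement $\Delta_C(1)\neq0$. I would derive it from the Wang sequence with $\ZZ$ coefficients: $T_1-1$ is invertible over $\ZZ$ on $H_1(F,\ZZ)/\text{tors}$, because $H_1(U,\ZZ)$ restricted to the cover kernel has cokernel of $T-1$ on $H_1(F)$ equal to $\ker(H_1(U)\to\ZZ/d)=0$.
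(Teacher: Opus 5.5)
The paper does not prove this proposition; it cites \cite[Proposition~2.1]{AD15}. Your final paragraph is a correct, self-contained proof, and it is the classical Zariski-type argument. The steps are:
\begin{enumerate}
\item $\Delta_C(t)\in\ZZ[t]$ is the characteristic polynomial of $T_*$ on the lattice $H_1(F,\ZZ)/\mathrm{tors}$; this is the same as that of $T_1=T^*$ on $H^1(F,\CC)$.
\item All its roots are $d$-th roots of unity, so $\Delta_C=\prod_{e\mid d}\Phi_e^{m_e}$.
\item $\Delta_C(1)=\pm1$.
\item Since $\Phi_{p^k}(1)=p$ and $\Phi_e(1)=1$ for every $e>1$ that is not a prime power, this forces $m_{p^k}=0$.
\item Semisimplicity of $T_1$ then gives $\dim H^1(F,\CC)_{\lambda_q}=m_{p^k}=0$.
\end{enumerate}
Compared with the citation, your route shows that irreducibility enters only through $H_1(U,\ZZ)\cong\ZZ/d$, and it uses nothing beyond elementary algebraic topology.

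When you write it up, the proof should be that last paragraph alone, with the integral input stated precisely. Your first bullet only yields $\Delta_C(1)\neq0$. Step~2 runs the logic backwards: it deduces $M/(t-1)M=0$ from $|\Delta_C(1)|=1$, which is exactly what has to be proved. The Fox-type lemma you quote is also vacuous here, because a nontrivial connected $p$-group cover already forces $H_1(X,\FF_p)\neq0$.

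The correct direction uses the Galois cover $F\to U$ with group $\ZZ/d$. Its low-degree exact sequence is
\[
H_2(\ZZ/d;\ZZ)\to H_1(F,\ZZ)_{\ZZ/d}\to H_1(U,\ZZ)\to \ZZ/d\to 0 .
\]
Since $H_2(\ZZ/d;\ZZ)=0$, this gives $\operatorname{coker}(T_*-1)=\ker\bigl(H_1(U,\ZZ)\to\ZZ/d\bigr)=0$. Equivalently, you can use the Wang sequence of $f\colon\CC^3\setminus V(f)\to\CC^*$, with $H_1(\CC^3\setminus V(f),\ZZ)\cong\ZZ$ and $f_*$ an isomorphism. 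You name the Wang sequence but write the covering-space formula, so make the choice explicit.

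Hence $T_*-1$ is surjective on the lattice $H_1(F,\ZZ)/\mathrm{tors}$. A surjective endomorphism of a lattice has finite (indeed trivial) cokernel, so $|\det(T_*-1)|=1$, which is exactly $\Delta_C(1)=\pm1$.
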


The next ingredient is a description of the eigenspaces
$H^1(F,\CC)_{\lambda_q}$ in terms of syzygies, following
\cite[\S~3.1]{DS18b} and \cite[\S~2]{DS20}. 
For completeness, and to clarify the   degree constraints
involved in the cited results, we include     necessary details for the proof of the next proposition.

For any integer $m$ we consider the homogeneous syzygies that have divergence $0$:
\begin{equation}
\label{eq; Km}
 K_m(f):=\left\lbrace \theta=(A,B,C)\in\AR(f)_m \ \vert \ \textup{div}(\theta):=A_x+B_y+C_z=0
 \right\rbrace \/.
\end{equation} 
 
\begin{prop}
\label{prop; H1isdivzero}
When $C$ is a reduced curve, for any  $1\le q<d$ we have
\begin{equation}\label{eq; H1}
\dim_\CC H^1(F, \CC)_{\lambda_q}=\dim K_{q-2}(f)+\dim K_{d-q-2}(f).
\end{equation}	
Consequently, when $C$ is irreducible, for any $1\le q<d$ we have
\begin{equation}
\label{eq; boundH1byK}
 \dim_\CC N(f)_{2d-3-q}\leq \chi(U)+\dim K_{q-2}(f)+\dim K_{d-q-2}(f) \/.
\end{equation}
\end{prop}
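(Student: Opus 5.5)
The second assertion \eqref{eq; boundH1byK} follows by inserting \eqref{eq; H1} into Proposition~\ref{prop; EulercharboundA}. The real work is \eqref{eq; H1}, and I would prove it through the algebraic de~Rham description of $H^*(U,\cL_q)$, as in \cite[\S~3.1]{DS18b} and \cite{DS20}.

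\textbf{Step 1: de~Rham model.} Recall $H^1(F,\CC)_{\lambda_q}\simeq H^1(U,\cL_q)$. Using Deligne's theory of logarithmic forms, or equivalently the graded twisted de~Rham complex of \cite{DS18b}, this cohomology is computed by the complex of homogeneous differential forms on $\CC^3$ of total degree a multiple of $d$. The differential is
\[
d_f\omega=d\omega-\tfrac{k}{d}\,\frac{df}{f}\wedge\omega .
\]
Concretely, a class in $H^1(U,\cL_q)$ is represented by $\omega/f^{k}$, where $\omega$ is a homogeneous $1$-form with $\iota_E\omega=0$ ($E$ the Euler field) and the degree is fixed by $q$ modulo $d$. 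I expect the two summands in \eqref{eq; H1} to come from the two Hodge pieces of $H^1$. The piece $F^1/F^2$ has pole order $1$, and the piece $\gr_F^0$ is identified with the complex conjugate of the $\lambda_{d-q}$ eigenspace. Since $T$ is defined over $\mathbb{R}$, conjugation exchanges $\lambda_q$ and $\lambda_{d-q}$, which explains the symmetry $q\leftrightarrow d-q$.

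\textbf{Step 2: forms with pole order one and divergence-zero syzygies.} Take a $1$-form $\omega$ with $\iota_E\omega=0$. Write $\omega=\iota_E(\alpha)$ for a $2$-form $\alpha$, and write $\alpha=\iota_\theta(dx\wedge dy\wedge dz)$ for a vector field $\theta=(A,B,C)$. The closedness condition $d(\omega/f)=0$ on the pole-order-one part becomes two conditions:
\begin{enumerate}
\item $df\wedge\alpha=0$, which says $\theta\in\AR(f)$;
\item $d\alpha=0$, which says $\diver\theta=0$.
\end{enumerate}
Matching degrees gives $\deg\theta=q-2$ for the $\lambda_q$ eigenspace. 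Exactness modulo $d_f$ would force the form to come from a $0$-form $h/f^{k}$. For $1\leq q<d$ the relevant degree is not divisible by $d$, so no nonzero exact forms lie in this range. This gives
\[
\gr_F^1H^1(F,\CC)_{\lambda_q}\simeq K_{q-2}(f).
\]
\textbf{Step 3: conjugate piece.} Applying Step~2 to $\lambda_{d-q}=\overline{\lambda_q}$ gives
\[
\gr_F^0H^1(F,\CC)_{\lambda_q}\simeq\overline{\gr_F^1H^1(F,\CC)_{\lambda_{d-q}}}\simeq K_{d-q-2}(f).
\]
Two facts are needed here. The first is that $H^1$ has no $\gr_F^2$ contribution, which holds because $F$ is a smooth affine surface, so its $H^1$ has Hodge types only $(1,0),(0,1),(1,1)$. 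The second is that $T$ acts semisimply, which was noted above. Adding the two graded pieces gives \eqref{eq; H1}.

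\textbf{Main obstacle.} The delicate point is the weight-$2$ part of type $(1,1)$. This part appears when $C$ is reducible, and then the two-term count can fail. I would handle it by the filtration comparison in \cite[\S~2]{DS20}, which shows that for $\lambda_q\neq1$ the pole-order spectral sequence degenerates at the needed step, so the $(1,1)$ part does not contribute in this eigenspace. Failing that, I would simply cite \cite[Theorem~2.2]{DS20} for the identification $\gr_F^1\simeq K_{q-2}$, and track the degree shift $q-2$ carefully by matching the Euler contraction degree.
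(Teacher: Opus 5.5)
Your skeleton matches the paper's. You reduce \eqref{eq; H1} to a two-term formula (the minimal-pole-order piece for $q$ plus a conjugate contribution from $d-q$) and identify that piece with $K_{q-2}(f)$; then \eqref{eq; boundH1byK} follows from Proposition~\ref{prop; EulercharboundA}. The difference is that the paper does not derive the two-term formula. It quotes $\dim H^1(F,\CC)_{\lambda_q}=\dim E_2^{1,0}(f)_q+\dim E_2^{1,0}(f)_{d-q}$ from \cite[Corollary~2.4]{DS20}. It then proves only $E_2^{1,0}(f)_m\cong K_{m-2}(f)$ for $1\le m<d$, using $H^1(K_f^\bullet)=0$ (reducedness), $H^2(K_f^\bullet)_m=\{\omega:\mathrm{d}f\wedge\omega=0\}\cong\AR(f)_{m-2}$, and $d_1=\diver$. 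Your Step~2 is this same computation in twisted de~Rham language. Your Steps~1 and~3, which try to replace the citation, are where the gap lies.

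First, your account of the $(1,1)$-part is wrong in both claim and mechanism. For every reduced $C$, reducible or not, and every $\lambda\neq1$, $H^1(F,\CC)_\lambda$ is pure of weight $1$. The type-$(1,1)$ part of $H^1(F)$ is exactly $H^1(F,\CC)_1=H^1(U)\cong\CC^{r-1}$, where $r$ is the number of components, and this lies outside the range $1\le q<d$. One way to see purity: take a $T$-equivariant good compactification of $F$ built from a resolution of $\{f=w^d\}\subset\PP^3$. The boundary components over $C$ are $T$-fixed, so for $\lambda\neq1$ the $\lambda$-part of $\gr^W_2H^1(F)$ comes only from exceptional curves, where the Gysin map is injective by negative definiteness. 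Degeneration of the pole order spectral sequence says nothing about weights, so it cannot be your mechanism. Without purity, your Step~3 isomorphism $\gr^0_FH^1_{\lambda_q}\cong\overline{\gr^1_FH^1_{\lambda_{d-q}}}$ is unjustified.

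Second, ``$F^1/F^2$ has pole order one'' is only asserted. What Step~2 actually computes is $P^1H^1(F,\CC)_{\lambda_q}$. The general Hodge versus pole order comparison gives only $F^1\subseteq P^1$, and equality on $H^1(F)_{\neq1}$ is the other nontrivial input packaged in \cite[Corollary~2.4]{DS20}. So your ``fallback'' citation is necessary, and once you use it your proof becomes the paper's.

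Finally, tighten Step~2:
\begin{itemize}
\item The form is $\iota_E\alpha/f^{q/d}$, not $\omega/f$.
\item Closedness reads $\frac{q}{d}\,\mathrm{d}f\wedge\alpha=f\,\mathrm{d}\alpha$. This becomes your two conditions only after replacing $\theta$ by $\theta-\frac{1}{q}\diver(\theta)E$, which leaves $\iota_E\alpha$ unchanged.
\item Excluding exact forms requires allowing primitives $h/f^{t}$ with $t\in q/d+\ZZ_{\ge0}$. These are killed by repeatedly using $f\mid h$, which needs $f$ reduced, until the degree drops below $d$. This is the paper's $H^1(K_f^\bullet)=0$; ``the degree is not divisible by $d$'' is not the reason.
\end{itemize}
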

\begin{proof}
Let $\Omega_{S/\CC}^j$ be the module of differential $j$-forms on $S=\CC[x,y,z]$ with natural grading 
\[
\deg x=\deg y=\deg z=\deg \mathrm{d}x=\deg \mathrm{d}y=\deg \mathrm{d}z=1.
\]
The usual differentiation $\mathrm{d}$ is  degree-preserving, while taking wedge with  $\mathrm{d}f$ increases degree by $d$. 
Let $K_f^\bullet:=(\Omega_{S/\CC}^\bullet, \ \mathrm{d}f\wedge)$ be  the Koszul complex of $J_f$. Its cohomology modules $H^{k}(K_f^\bullet)$ are also graded and we write $H^{k}(K_f^\bullet)_{m}$ for the degree $m$ piece. 

Following  \cite[(2.7)]{DS20}, the  pole order filtration induces the first page of  
spectral sequence as  
\[
E_1^{s,t}(f)_m :=
H^{s+t+1}(K_f^\bullet)_{td+m} \text{ and }  
\mathrm{d}_1\colon E_1^{s,t}(f)_m
\longrightarrow E_1^{s+1,t}(f)_m\/, \   [\omega]\mapsto [\mathrm{d}\omega] \/.
\]

We know that $\dim_\CC H^1(F, \CC)_{\lambda_q}$ equals the multiplicity $m(\lambda_q)$ of $\lambda_q$ in the Alexander polynomial $\Delta_C(t)$. By \cite[Corollary 2.4]{DS20} we have 
\[
 \dim_\CC H^1(F, \CC)_{\lambda_q}=
 \dim E_2^{1,0}(f)_q+\dim E_2^{1,0}(f)_{d-q} \/.
\]

Since $f$ is reduced, $\gcd (f_x, f_y, f_z)=1$ and hence $H^1(K_f^\bullet)=0$. 
Consequently 
\[
E_2^{1,0}(f)_m
=
\ker\left[
\mathrm{d}_1\colon 
H^2(K_f^\bullet)_m
\longrightarrow
H^3(K_f^\bullet)_m
\right].
\]

Since $(\Omega_{S/\CC}^1)_{m-d}=0$ for any $1\leq m <d=\deg(f)$, we see that 
\[
 H^2(K_f^\bullet)_m
=\frac{\ker [\mathrm{d}f\wedge\colon (\Omega_{S/\CC}^2)_m\to  (\Omega_{S/\CC}^3)_{m+d}]}{\mathrm{d}f\wedge (\Omega_{S/\CC}^1)_{m-d}}
=
\{\omega\in(\Omega^2_{S/\CC})_m\ \vert \  
  \mathrm{d}f\wedge\omega=0\}  \/.
\]
Then by \cite[Remark~3.1]{DS19} there is an isomorphism 
$\varphi_m\colon \AR(f)_{m-2}\xrightarrow{\simeq} H^2(K_f^\bullet)_m$  for $1\leq m <d$. 

Each syzygy $\theta=(A,B,C)$ uniquely determines
a vector field $D_\theta$ and a polynomial $2$-form
$\omega_\theta$:
\[
D_\theta:= A \,\partial_x + B \,\partial_y +C\,\partial_z \/, \quad 
\omega_\theta
:=
A\,\mathrm{d}y\wedge \mathrm{d}z+B\,\mathrm{d}z\wedge \mathrm{d}x+C\,\mathrm{d}x\wedge \mathrm{d}y \/.
\] 
An easy computation shows that 
$\mathrm{d}f\wedge\omega_\theta=(Af_x+Bf_y+Cf_z)\cdot \mathrm{d} x\wedge \mathrm{d} y\wedge \mathrm{d}z$, thus 
$\mathrm{d}f\wedge \omega_\theta=0$ for every $\theta\in \AR(f)_{m-2}$. 
The isomorphism $\varphi_m$ simply sends $\theta$ to $\omega_\theta$.   
 
Since  $\mathrm{d}\omega_\theta
=\textup{div}(\theta)\,dx\wedge dy\wedge dz$,
under the identification  
$\varphi_m$  the differential $d_1$ becomes
\[
\textup{div}\colon 
\AR(f)_{m-2}\longrightarrow S_{m-3},
\quad
\theta=(A,B,C)\longmapsto \textup{div}(\theta)=A_x+B_y+C_z.
\]
Consequently   
$E_2^{1,0}(f)_m
\cong
\ker\left(
\textup{div}
\right)
=
K_{m-2}(f).
$ for each $1\leq m<d$.

Combining with Proposition~\ref{prop; EulercharboundA}  we obtain the other statement.
\end{proof}

\section{Key Obstruction: the Third Numerical Constraints}
\label{sec; keyobs}
In this section we prove 
the key obstruction  that will constrain the possible values of the eigenvalues $\lambda_q$, for any possible counter-examples of Theorem~\ref{theo; main}. Again we assume that $C=V(f)\subset \PP^2$ is an irreducible reduced plane curve and $\deg f=d\geq 2$.

\begin{lemm}
\label{lemm; linearobstruction} 
Let $\ell\in S_1$ be a nonzero linear form.  
If $m<d-1$,   then $\AR(\ell)_m\cap K_m(f)=\{0\}$.
\end{lemm}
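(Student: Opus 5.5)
The plan is to normalize $\ell$ to a coordinate, use $\textup{div}(\theta)=0$ to write the two remaining components of $\theta$ as partial derivatives of one potential $H$, and then use the irreducibility of $C$ and the degree bound $m<d-1$ to force $H$ to be essentially constant.

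\emph{Reduction to $\ell=x$.} Under a linear change of coordinates $g\in GL_3(\CC)$, a triple $\theta=(A,B,C)$ transforms as a vector field. Both the pairing $D_\theta(f)=Af_x+Bf_y+Cf_z$ and the divergence are invariant under linear changes of coordinates, since the Jacobian of the change is constant. The hypotheses are also preserved: $f\circ g$ is still irreducible of degree $d$. So we may assume $\ell=x$. The condition $\theta\in\AR(\ell)_m$ then reads $A=0$. Hence $\theta=(0,B,C)$ with
\[
Bf_y+Cf_z=0,\qquad B_y+C_z=0 .
\]

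\emph{A potential for $(B,C)$.} Decompose $B$ and $C$ into pieces $B_k,C_k$ that are homogeneous of degree $k$ in $(y,z)$, with coefficients in $\CC[x]$. Divergence zero holds piece by piece. Put
\[
H=\sum_k \frac{zB_k-yC_k}{k+1}.
\]
Euler's formula in $(y,z)$ gives $H_z=B$ and $H_y=-C$, and $H$ is homogeneous of degree $m+1$. The syzygy condition becomes $H_zf_y-H_yf_z=0$. In other words, for each fixed $x=c$ the polynomial $H(c,\cdot,\cdot)$ is a first integral of the Hamiltonian vector field $f_z\partial_y-f_y\partial_z$ of $f_c:=f(c,y,z)$.

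\emph{Degree argument.} Fix a generic $c\neq0$. Then $f_c$ is a polynomial in $(y,z)$ of degree exactly $d$, since $x\nmid f$ because $f$ is irreducible of degree $d\ge2$. Its zero set is the affine part $C\setminus V(x)$ of $C$, which is irreducible. Because $C$ is reduced, $f_c$ is reduced. At smooth points of $V(f_c)$ the Hamiltonian field is nonzero and tangent to the curve. The identity $dH\wedge df=0$ then shows that $H(c,\cdot,\cdot)$ is locally constant along $V(f_c)_{\rm reg}$, which is connected because the curve is irreducible. So $H(c,\cdot,\cdot)$ equals a constant $h(c)$ on $V(f_c)$. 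Since $f_c$ is irreducible and reduced, $f_c$ divides $H(c,\cdot,\cdot)-h(c)$. The latter has degree at most $m+1<d=\deg f_c$, because $m<d-1$. Hence $H(c,\cdot,\cdot)\equiv h(c)$ for generic $c$, so $H_y=H_z=0$ identically, which gives $B=C=0$ and $\theta=0$.

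\emph{Main obstacle.} The delicate step is justifying that $f_c$ is irreducible, reduced and of full degree $d$ for generic $c$, and that $H$ is constant along the whole curve $V(f_c)$ rather than only locally. Both follow from the irreducibility and reducedness of $C$ together with $V(x)\not\subset C$. The connectedness of the smooth locus of an irreducible affine plane curve is what converts local constancy into global constancy.
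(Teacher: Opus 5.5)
Your proof is correct and follows essentially the same route as the paper. Both arguments normalize $\ell$ to a coordinate so that $\theta$ has no component along it, and use $\operatorname{div}\theta=0$ to write the two remaining components as the Hamiltonian field of a potential $H$ of degree $m+1$. Irreducibility of the dehomogenized $f$ then forces it to divide $H-\mathrm{const}$, which the bound $m+1<d$ excludes unless $H$ is constant. The differences are cosmetic: you build a homogeneous potential by the Euler-type formula and restrict to slices $x=c$, while the paper dehomogenizes at $z=1$ first and invokes the Poincar\'e lemma. Your use of $\mathrm{d}H\wedge \mathrm{d}f=0$ together with connectedness of the smooth locus is a somewhat more careful justification that $H$ is constant on the curve than the paper's tangency phrasing.
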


\begin{proof}
Suppose that $m<d-1$ and $\theta=(A, B,C) \in \AR(\ell)_m\cap K_m(f)$, that is to say, 
\[
 Af_x+Bf_y+Cf_z=A\ell_x+B\ell_y+C\ell_z=0 
 \text{ and } \diver(\theta)=A_x+B_y+C_z=0.
\]
 
Since linear coordinate change multiplies a nonzero constant to the first derivation relations, up
 to  linear coordinate changes we may assume  $\ell=z$ and $[0:0:1]\in C$. 
Then the relations become
\[
C=0, \quad  Af_x+Bf_y=0, \quad A_x+B_y=0.
\]
Restricting to the affine complement $U_z:=\PP^2\setminus V(\ell)$
we set
\[
 a:=A(x,y,1),\quad b:=B(x,y,1),\quad g:=f(x,y,1).
\]
Since $f$ is irreducible, $z$ does not divide $f$ and hence $g$ also has degree $d$.   Then we have 
\[
a_x+b_y=0 \/, \quad ag_x+bg_y=0
\]

Define $1$-form $\omega:=-b \mathrm{d}x+a \mathrm{d}y$, then $\omega$ is closed since  $\mathrm{d} \omega = (a_x+b_y) \mathrm{d}x\wedge \mathrm{d}y =0$. By Poincar\'e lemma $\omega$ is exact and there exists a polynomial $H\in \CC[x,y]$ such that $H_x=-b$ and $H_y=a$. 

The vector field  
$\nu:=a\partial_x+b\partial_y$ is then tangent to both curves $C=V(g)$ and $C'=V(H-H(0))$, therefore $C$ and $C'$ share a common irreducible component passing through $0$. Since $g$ is irreducible, this shows that $g$  divides $H-H(0)$. 
However,   $\theta\in S_m^3$ implies that $\deg H\leq m+1<d=\deg g$. Thus we have $a= b\equiv 0$, and consequently we have $A=B=C=0$.
\end{proof}

\begin{exam}
The assumption $m<d-1$ is necessary. For every $d$ we set $\ell=z$, then the polynomial $f=x^d+y^{d-1}z$  is irreducible  and  we have $(f_y, -f_x, 0)\neq 0 \in \AR(\ell)_{d-1}\cap K_{d-1}(f)$.
\end{exam}

We now give the essential obstruction used in the proof of Theorem~\ref{theo; main}.  
\begin{lemm}
\label{lemm; keyobstruction}
Given any  nonzero Jacobian syzygy of  minimal  degree $\rho \in\AR(f)_{\md(f)}$ and any pair of   integers $(L, \alpha)$  satisfying 
\begin{equation}
\label{eq; obslemmanumassum}
 1\leq L \/, \quad 0\leq 2\alpha<L,\quad \alpha+2L-3<\md(f) ,\quad \md(f)+\alpha<d-1.
\end{equation}
There do not exist nonzero polynomial ${P, Q, R}\subset S$ such that  
\[
P\in S_\alpha \/, \quad Q\in S_{L-1}\/, \quad 
R\in S_L \text{ and }
\diver(P\cdot \rho)=\diver(Q\cdot \rho)=\diver(R\cdot \rho)=0.
\]
\end{lemm}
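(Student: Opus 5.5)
The plan is to turn the three divergence conditions into statements about rational first integrals of the vector field $D_\rho$, and then to contradict the minimality of $\md(f)$ by producing a nonzero syzygy of degree $<\md(f)$. Write $\delta:=\diver(\rho)$. For any homogeneous $G$ we have $\diver(G\rho)=G\delta+D_\rho(G)$. Hence each of $P,Q,R$ solves
\[
D_\rho(G)=-\delta\, G. \tag{$\ast$}
\]
For two solutions $G_1,G_2$ this gives $G_1D_\rho(G_2)-G_2D_\rho(G_1)=0$, that is, $\rho\cdot(G_1\nabla G_2-G_2\nabla G_1)=0$. So $u:=Q/P$ and $v:=R/P$ are rational first integrals of $D_\rho$, homogeneous of degrees $L-1-\alpha\geq 1$ and $L-\alpha\geq 1$; note $L-1-\alpha>\alpha\geq 0$ by $2\alpha<L$.

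\textbf{Step 1 (independent case).} Suppose $u,v$ are functionally independent, i.e.\ $\nabla u\times\nabla v\neq 0$. Since $\rho$ is orthogonal to both $\nabla u$ and $\nabla v$, it is proportional to $W:=(P\nabla Q-Q\nabla P)\times(P\nabla R-R\nabla P)$. A direct expansion shows $W$ is divisible by $P$, so $V:=W/P$ is a polynomial vector field of degree $\alpha+2L-3$. Now $\rho$ is proportional to $V$ and $\rho\cdot\nabla f=0$, so $V\cdot\nabla f=0$, making $V$ a Jacobian syzygy. By the argument of Proposition~\ref{prop; maximumnu}, since $\gcd(\rho)=1$ we get $V=h\rho$ with $h$ a polynomial. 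This forces $\deg V\geq\md(f)$, contradicting $\alpha+2L-3<\md(f)$ unless $V=0$, which is excluded in this case.

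\textbf{Step 2 (dependent case).} Otherwise $u,v$ are algebraically dependent homogeneous functions of distinct positive degrees. Then $w:=u^{L-\alpha}/v^{L-1-\alpha}$ is a degree-$0$ first integral of $D_\rho$, and it is also annihilated by the Euler field $E$. I would then split into two subcases.
\begin{itemize}
\item If $w$ is nonconstant, $\nabla w$ is orthogonal to both $\rho$ and $E$. I would then choose a linear form $\ell$ and combine $\rho$ with $E$ and with a suitable multiple $G\rho$ coming from $(\ast)$ to manufacture a nonzero element of $\AR(\ell)_m\cap K_m(f)$ with $m\leq \md(f)+\alpha<d-1$. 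This contradicts Lemma~\ref{lemm; linearobstruction}; the hypothesis $\md(f)+\alpha<d-1$ is what makes that lemma applicable.
\item If $w$ is constant, then $Q^{L-\alpha}$ and $R^{L-1-\alpha}P$ are proportional. Unique factorization in $S$ then forces $P,Q,R$ to be powers and products of a common homogeneous $G_0$ that itself solves $(\ast)$, up to a twist. Degree bookkeeping (degrees $\alpha$, $L-1$, $L$ with $2\alpha<L$) should give a contradiction, or reduce to the situation where $G_0$ gives a first integral of positive degree. In the latter situation Step 1 applies to the pair $(G_0,f)$ via $\rho\parallel\nabla f\times\nabla G_0$.
\end{itemize}

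\textbf{Main obstacle.} The main obstacle is the dependent case: converting a degree-$0$ first integral into an actual divergence-free syzygy tangent to a line, with degree below $d-1$. I would first try $\theta:=\ell\cdot(\text{component adjustments of }G\rho)$, using $\theta=G\rho-\tfrac{\ell(G\rho)}{\ell(E)}E$-type corrections. I would then check divergence-freeness via $(\ast)$ together with $\diver(E)=3$ and Euler's identity. The divisibility of $W$ by $P$ and the degree count in Step 1 are routine.
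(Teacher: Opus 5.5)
Your Step 1 is correct and is the first half of the paper's proof. Your $V=W/P$ is the paper's vector field $\Xi$ dual to $\Omega=P\,\mathrm{d}Q\wedge\mathrm{d}R+Q\,\mathrm{d}R\wedge\mathrm{d}P+R\,\mathrm{d}P\wedge\mathrm{d}Q$. It is a Jacobian syzygy of degree $\alpha+2L-3<\md(f)$, hence zero. You do not need Proposition~\ref{prop; maximumnu} for this; the definition of $\md(f)$ suffices.

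The gap is in Step 2, which is where the real content of the lemma lies. First, your subcase ``$w$ nonconstant'' is empty, and the construction you sketch for it is not an argument. Once $\mathrm{d}u\wedge\mathrm{d}v=0$, contract with the Euler field and set $k=L-\alpha$. This gives $0=\iota_E(\mathrm{d}u\wedge\mathrm{d}v)=(k-1)u\,\mathrm{d}v-kv\,\mathrm{d}u$. That form is a nonzero rational multiple of $\mathrm{d}(v^{k-1}/u^k)$, so $w$ is automatically constant and $PR^{k-1}=C_0Q^k$ always holds. The case you flagged as the main obstacle therefore never occurs.

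Second, in the constant case your unique-factorization claim (that $P,Q,R$ are built from a common $G_0$) is not what happens. Your proposed endgame, Step 1 applied to $(G_0,f)$ via $\rho\parallel\nabla f\times\nabla G_0$, yields no contradiction. For example, for a linear first integral $\ell$, $\nabla f\times\nabla\ell$ is a syzygy of degree $d-1\geq\md(f)$.

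The missing step is a divisibility argument. Write $Q=gh$ and $R=gr$ with $\gcd(h,r)=1$. Then $Pr^{k-1}=C_0gh^k$, so $h^k\mid P$. Since $\deg P=\alpha<k$ (this is exactly $2\alpha<L$), $h$ is constant, so $Q\mid R$ and $\ell:=R/Q$ is a nonzero linear form. From $v=\ell u$, $D_\rho(u)=D_\rho(v)=0$ and $u\neq 0$, you get $D_\rho(\ell)=0$. Hence $P\rho$ itself is a nonzero element of $\AR(\ell)_{\md(f)+\alpha}\cap K_{\md(f)+\alpha}(f)$; it is divergence-free by the hypothesis on $P$. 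Since $\md(f)+\alpha<d-1$, Lemma~\ref{lemm; linearobstruction} gives the contradiction.

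So Lemma~\ref{lemm; linearobstruction} belongs in the constant case, applied directly to $P\rho$, with no corrections by $E$. Finally, a small slip: $2\alpha<L$ gives only $L-1-\alpha\geq\alpha$, so $u$ can have degree $0$ (e.g.\ $L=1$, $\alpha=0$). That case is harmless under the argument above, but it contradicts your claim that both degrees are positive.
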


\begin{proof}
We prove by contradiction.
Assume that there exists a nonzero Jacobian syzygy $\rho=(A,B,C)$ and nonzero polynomials $\{P, Q, R\}$ satisfying all the hypotheses. 
Let $D_\rho:=A\partial_x+B\partial_y +C\partial_z$ be the assigned vector field of $\rho$. For any $g\in S$ we have  
\[
\diver(g\cdot \rho) =(gA)_x+ (gB)_y+(gC)_z=
 D_\rho(g) + \diver(\rho)\cdot g \/.
\]
Thus by assumption we have 
\begin{equation}
\label{eq; ODE}
D_\rho (P)+\diver(\rho) P =
D_\rho (Q)+\diver(\rho) Q =
D_\rho (R)+\diver(\rho) R =0 \/.
\end{equation}

Set $u=Q/P$ and $v=R/P$, a direct computation gives
\[
D_\rho(u)=D_\rho (v)=D_\rho (f)=0 \/.
\]

Let $\FF:=\textup{Frac}(S)=\CC(x, y, z)$ be the field of rational functions. 
Since $\rho\neq 0$,  $\mathrm{d}u$, $\mathrm{d}v$ and $\mathrm{d}f$ are linearly dependent vectors in $\FF^3$ and hence $\mathrm{d}f\wedge \mathrm{d}u\wedge \mathrm{d}v=0$. 
Since 
\[
\mathrm{d}u \wedge \mathrm{d}v= \frac{
P\mathrm{d}Q\wedge \mathrm{d}R+
Q \mathrm{d} R \wedge \mathrm{d} P+
R \mathrm{d}P\wedge \mathrm{d} Q}{P^3}
=: \frac{\Omega}{P^3} \/, 
\]
The polynomial $2$-form $\Omega$ uniquely determines a vector field $\Xi$ by 
$\iota_\Xi(\mathrm{d}x\wedge\mathrm{d}y\wedge\mathrm{d}z)=\Omega$, where
$\iota_\Xi$ denotes contraction by $\Xi$. 
Its coefficients are homogeneous of degree
$\alpha+2L-3$. 
Since $\mathrm{d}f\wedge \mathrm{d}u\wedge \mathrm{d}v=0$, we have $\Xi(f)=0$ and hence $\Xi\in \AR(f)_{\alpha+2L-3}$.

If $\alpha+2L-3<0$, then $\Xi=0$. Otherwise, $\Xi$ is a
Jacobian syzygy of degree less than $\md(f)$, so the
minimality of $\md(f)$  implies that $\Xi=0$ and therefore $\mathrm{d}u\wedge \mathrm{d}v=0$.

Set $k=L-\alpha$ for simplicity. Then \eqref{eq; obslemmanumassum} implies that $k\ge1$ and $\alpha<k$.
Let $E:=x\partial_x+y\partial_y+z\partial_z$ be the Euler vector field. Then $E(u)=(k-1)u$ and $E(v)=kv$ since $u$ and $v$ are homogeneous of degrees $k-1$ and $k$, respectively. 
Then  we have 
\begin{align*}
\mathrm{d}\left(\frac{v^{k-1}}{u^k}\right)
&=\frac{v^{k-2}}{u^{k+1}}\cdot 
  \left((k-1)u\,\mathrm{d}v-kv\,\mathrm{d}u\right) \\
&=\frac{v^{k-2}}{u^{k+1}}\cdot 
  \left(E(u)\,\mathrm{d}v-E(v)\,\mathrm{d}u\right) \\
&=\frac{v^{k-2}}{u^{k+1}}\cdot 
  \iota_E \left(\mathrm{d}u\wedge\mathrm{d}v\right)
=0.
\end{align*}
Here $\iota_E$ denotes  contraction by $E$. 
Thus $v^{k-1}= C_0 \cdot u^k$ for some constant $C_0\in\CC^*$.   
Clearing denominators we then have $PR^{k-1}=C_0 \cdot Q^k$.

Write $h=\frac{Q}{\gcd (Q, R)}$. Then  $h^k$ divides $P$.
However $\deg P=\alpha<k$, thus $h$ is a constant and  $Q$ divides $R$. 
Consequently $\ell:=\frac{R}{Q}\in S_1$ is a   nonzero linear
form. 

Finally we note that $D_{P\cdot \rho}(\ell)=P\cdot D_\rho(\ell)=0$ and thus 
$P\cdot \rho\neq 0\in \AR(\ell)_{\md(f)+\alpha}\cap K_{\md(f)+\alpha}(f)$. Since $\md(f)+\alpha<d-1$, this contradicts Lemma~\ref{lemm; linearobstruction}. 
\end{proof}

\section{The Proof of Theorem~\ref{theo; main}}
\label{proof}
Now we prove Theorem~\ref{theo; main} by contradiction. 
Let $C\subset \PP^2$ be an irreducible reduced curve  cut by a homogeneous polynomial $f$ of degree $\geq 2$. We assume that $\nu(C)> \chi(U)$. 

Let $\md(f)=\min  
\{m\vert \AR(f)_m\neq 0\}$ be the minimal Jacobian syzygy degree. We fix a minimal degree Jacobian syzygy $\rho=(A,B,C)\in \AR(f)_{\md(f)}$.
We will use the  numerical constraints discussed previously to derive a contradiction.

Recall from Equation~\eqref{eq; centralmaximum} that
$\nu(C)=\dim N(f)_{k_0}$ for  $k_0=\lfloor \frac{3d-6}{2} \rfloor$. 
The observation is that  $k_0=2d-3-c$, where $c:=\lceil \frac{d}{2} \rceil$. Clearly  $1\leq c <d$ and $d\leq 2c\leq d+1$. By Proposition~\ref{prop; EulercharboundA}   we have 
\[
\nu(C)= \dim_\CC N(f)_{2d-3-c} \leq \chi(U) +\dim_\CC H^1(F, \CC)_{\lambda_c} \/.
\] 
Combining this inequality with 
Proposition~\ref{prop; H1isdivzero}
and the  
$\nu(C)> \chi(U)$ assumption, 
we have
\[
0< \dim_\CC H^1(F, \CC)_{\lambda_c} =
\dim_\CC K_{c-2}(f) + \dim_\CC K_{d-c-2}(f) \/.
\]
Since $K_m(f)\subset \AR(f)_m$ and $d-c\leq c \leq d-c+1$, the minimality of $\md(f)$ implies that 
\begin{equation}
\label{eq; c1}
\md(f) \leq c-2 < \frac{d}{2} \/.
\end{equation}

Therefore by Equation~\eqref{eq; maximalrangenu} and  Inequality ~\eqref{eq; boundH1byK}, for every $\md(f)\leq q \leq d-\md(f)$ we have 
\begin{equation}
\label{eq; c0}
\nu(C) = \dim_\CC N(f)_{2d-3-q} \leq \chi(U)+ \dim_\CC K_{q-2}(f) + \dim_\CC K_{d-q-2}(f)  \/.
\end{equation}

Since $\md(f)\leq q \leq d-\md(f)$, both $q-2$ and $d-q-2$ lie in  the range of Equation~\eqref{eq; minimalsyzygy}. Set
\[
 W_m:=\left\lbrace g\in S_m \ \vert \  g\cdot \rho\in K_{m+\md(f)}(f) \right\rbrace
 =\left\lbrace g\in S_m\ \vert \ \diver(g\cdot \rho)=0  \right\rbrace  \/,
 \] 
 then  
$K_{q-2}(f)= W_{q-\md(f)-2}\cdot \rho$ and  
$K_{d-q-2}(f)= W_{d-q-\md(f)-2}\cdot \rho$. 
Assuming $\nu(C)> \chi(U)$, 	
\begin{equation}
\label{eq; c2} 
\text{ either } W_{q-\md(f)-2} \neq 0 \text{ or } W_{d-q-\md(f)-2}\neq 0 \text{ for every }
\md(f)\leq q \leq d-\md(f)  \/.	
\end{equation} 

\begin{lemm}
\label{lemm; lowerboundmd(f)}
We have $\md(f) > \frac{3(d-1)}{7}$ under the assumption that $\nu(C)> \chi(U)$.
\end{lemm}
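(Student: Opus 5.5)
We argue by contradiction: assume $\md(f)\leq \frac{3(d-1)}{7}$ and produce nonzero $P\in W_\alpha$, $Q\in W_{L-1}$, $R\in W_L$ whose degrees satisfy \eqref{eq; obslemmanumassum}, contradicting Lemma~\ref{lemm; keyobstruction}. Write $m:=\md(f)$ and $N:=d-2m-4$. By \eqref{eq; c1} we have $2m\leq d-3$, so $N\geq -1$. Substituting $j=q-m-2$, condition \eqref{eq; c2} says the following. For every $-2\leq j\leq N+2$, at least one of $W_j$, $W_{N-j}$ is nonzero. Since $W_k=0$ for $k<0$, the values $j=-2,-1$ force $W_{N+2}\neq0$ and $W_{N+1}\neq 0$. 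Let $\Sigma:=\{k\geq 0\mid W_k\neq 0\}$; then $\Sigma$ meets every pair $\{j,N-j\}$ and contains $N+1,N+2$.

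The plan is to take $\alpha:=a=\min\Sigma$, so that $P$ exists. For every $0\leq j<a$ we get $N-j\in\Sigma$, hence the whole block $[N-a+1,\,N+2]$ lies in $\Sigma$.

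\textbf{Case $a=0$.} Here $\diver(\rho)=0$ and $P=1$. We then look for the smallest $L\geq1$ with $L-1,L\in\Sigma$. The pairing condition should supply such consecutive elements well below $N+2$. For instance, if $k\notin\Sigma$ then $N-k\in\Sigma$, so gaps in the lower half of $\Sigma$ are mirrored by elements of the upper half. I would then check that $2L-3<m$ and $m<d-1$ follow from $m\leq 3(d-1)/7$.

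\textbf{Case $a\geq1$.} Take $Q,R$ of degrees $L-1,L$ inside the block, ideally $L=N-a+2$. The conditions of \eqref{eq; obslemmanumassum} then read
\[
3a<N+2,\qquad 2N-a+1<m,\qquad m+a<d-1 .
\]
The last one is automatic, since $a\leq N/2$ gives $m+a<d-1$.

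The main obstacle is arranging both $2\alpha<L$ and $\alpha+2L-3<m$ at the same time. When $a$ is small, $L$ must be pushed down. When $a$ is close to $N/2$, the block is short and $2a<L$ may fail, so a different pair $(L-1,L)$ must be extracted from $\Sigma$. I expect this to require a case split according to the size of $a$ relative to $N/3$. In the bad range I would also use that the complement of $\Sigma$ in $[0,N]$ is mirrored inside $\Sigma$, in order to locate two consecutive elements of $\Sigma$ near $2a$.

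Finally, substitute $N=d-2m-4$ into the resulting worst-case inequality. This should turn into exactly $7m>3(d-1)$, which is where the constant $3/7$ ought to come from. Concretely, I expect the binding choice to be roughly $\alpha\approx N/3$ with $L$ of size about $2N/3$, and I would tune the integer rounding against the bound $\frac{3(d-1)}{7}$. If the rounding fails at the boundary, I would additionally invoke Proposition~\ref{prop; primeordervanishing} to kill the one offending value of $q$.
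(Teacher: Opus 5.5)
Your proposal has a genuine gap. You try to derive the lower bound on $\md(f)$ from Lemma~\ref{lemm; keyobstruction}. But that lemma's hypothesis $\alpha+2L-3<\md(f)$ is itself a lower bound on $\md(f)$, and the pairing condition \eqref{eq; c2} cannot supply an admissible triple when $\md(f)$ is small.

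Concretely, $\Sigma=\{k : \lceil N/2\rceil\leq k\leq N+2\}$ satisfies \eqref{eq; c2} for every $-2\leq j\leq N+2$. Any triple drawn from this $\Sigma$ has $\alpha\geq\lceil N/2\rceil$, hence $L\geq 2\alpha+1\geq N+1$ and $\alpha+2L-3\geq \frac{5N}{2}-1$. With $N=d-2\md(f)-4$, the requirement $\alpha+2L-3<\md(f)$ then forces $12\md(f)>5d-22$. So whenever $\md(f)\leq (5d-22)/12$ (e.g.\ $\md(f)=1$ with $d$ large), no case split on $a=\min\Sigma$ can produce a contradiction.

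Your Case $a=0$ fails the same way. Take $\Sigma=\{0\}\cup[\lceil N/2\rceil,N+2]$ with $N\geq 3$. The first consecutive pair is $(\lceil N/2\rceil,\lceil N/2\rceil+1)$, and then $2L-3<\md(f)$ needs $3\md(f)>d-5$. The argument is therefore circular in exactly the range the lemma must exclude, and the constant $3/7$ cannot come out of this combinatorics.

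The missing idea is that Proposition~\ref{prop; primeordervanishing} is not a boundary fix; it is the whole proof. By \eqref{eq; c0}, the assumption $\nu(C)>\chi(U)$ forces $H^1(F,\CC)_{\lambda_q}\neq 0$ for every $q\in[\md(f),d-\md(f)]$. So it suffices to exhibit one $q$ in this interval with $\lambda_q$ of prime-power order whenever $\md(f)\leq 3(d-1)/7$.

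For $d$ even, take $q=d/2$, where $\lambda_q$ has order $2$. For $d$ odd, let $P=p^e$ be the largest prime power dividing $d$ and take $q=\frac{d(P-1)}{2P}$. Then $\gcd(d,q)=d/P$, so $\lambda_q$ has order $P$, and $q<d-q$. The inequality $q\geq\frac{3(d-1)}{7}$ is equivalent to $6P\geq d(7-P)$. This is automatic for $P\geq 7$, and the only odd $d$ with $P\leq 5$ are $d\in\{3,5,15\}$, where it is checked directly. Hence $\md(f)\leq q\leq d-\md(f)$, which contradicts the vanishing of $H^1(F,\CC)_{\lambda_q}$.

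The equality case $d=15$, $q=6$ is where $3/7$ comes from. Lemma~\ref{lemm; keyobstruction} is not used at all; in the paper the present lemma is an input that makes that obstruction applicable, not a consequence of it.
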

We proceed with the proof and postpone the proof of the lemma.

Set $L:=d-2\md(f)-2$.
By Inequality \eqref{eq; c1} we have   $\{\md(f), \md(f)+1, c\}\subset [\md(f), d-\md(f)]$ and $L\geq 1$.
  Substituting $q=\md(f)$, $q=\md(f)+1$ and $q=c$ into  \eqref{eq; c2} we have 
\begin{equation}
\label{eq; c3}
W_{L}\neq 0,\quad W_{L-1} \neq 0, 
\quad W_{c-\md(f)-2}\neq 0 \text{ or } 
W_{d-c-\md(f)-2}\neq 0 
\/.
\end{equation}
 
Since  $c-\md(f)-2\geq 0$, we 
 may choose an integer $\alpha\in\{c-\md(f)-2,\ d-c-\md(f)-2\}$ such that $\alpha\geq 0$ and 
$W_\alpha\neq 0$. 
Since $2c\geq d$, we have $d-c-\md(f)-2\leq c-\md(f)-2$ and therefore
\[
\max \left\lbrace0, \frac{L-3}{2} \right\rbrace \leq \alpha\leq c-\md(f) -2 \leq \frac{L-1}{2} \/.
\]
Then \eqref{eq; c3} shows that there exist nonzero polynomials $P\in S_\alpha$, $Q\in S_{L-1}$, $R\in S_L$ such that 
\[
\textup{div}(P\cdot \rho)=\textup{div}(Q\cdot \rho)=\textup{div}(R\cdot \rho) =0 
\]
and
\[
2\alpha <L\/, \quad \alpha+\md(f)\leq c-2<d-1\/, \quad \alpha+2L-3
 \leq \frac{5L-7}{2}
   =\frac{5d-10\md(f)-17}{2}<\md(f) \/.
\]
The last inequality is due to Lemma~\ref{lemm; lowerboundmd(f)}, which says  
$12\md(f)-5d+17>
 \frac{36(d-1)}7-5d+17 >0$.

But this contradicts Lemma~\ref{lemm; keyobstruction},  therefore the assumption 
$\nu(C) > \chi(U)$ is impossible  and thus   Theorem~\ref{theo; main} is true.

Let $g$ be the genus of the normalization  of $C$ and
let $b_p$ be the number of local branches at a singular point $p\in C$.
Since the normalization lifts $p$ to $b_p$ distinct points,  we have
\[
\chi(C)=2-2g-\sum_{p\in\Sing C}(b_p-1),
\quad
\chi(\PP^2\setminus C)
=1+2g+\sum_{p\in\Sing C}(b_p-1).
\]

It remains to prove Lemma~\ref{lemm; lowerboundmd(f)}. The proof  relies on 
Proposition~\ref{prop; primeordervanishing}, which says that the  eigenspaces $H^1(F, \CC)_{\lambda_q}$ vanish whenever the eigenvalue $\lambda_q$ has order of a prime power.   Set
\begin{equation}\label{eq; beta}
 \beta(d):=\max\left\lbrace\min \{q, d-q\}\Big\vert
   1\leq q<d \text{ and } \frac d{\gcd(d,q)}=p^e
                 \text{ for some prime }p 
                 \right\rbrace \leq c=\lceil \frac{d}{2}\rceil .
\end{equation} 

If $\md(f)\leq\beta(d)$, then there exists an integer $m$
with $\md(f)\leq m\leq d-\md(f)$ such that $\lambda_m$
has prime-power order.    
Then  $H^1(F, \CC)_{\lambda_m}=0$ by Proposition~\ref{prop; primeordervanishing}
 and hence $K_{m-2}(f)=K_{d-m-2}(f)=0$. 
However, the assumption $\nu(C)>\chi(U)$ and the Inequality \eqref{eq; c0} shows that  $H^1(F, \CC)_{\lambda_m}\neq 0$ for any $m\in [\md(f), d-\md(f)]$. 
This contradiction  forces  $\md(f)> \beta(d)$. 

It then suffices to prove $\beta(d)\geq \frac{3(d-1)}{7}$. 
When $d=2k$ is even, the eigenvalue $\lambda_k$ has order $2$ and 
\[
\beta(d)=k\geq \frac{3(d-1)}{7}=\frac{6k-3}{7} \/.
\]
Now we assume that $d=2k+1$ is odd. Let $e\geq 1$ and $P=p^e$ be the largest prime power  that divides $d$. Then  $\gcd (\frac{P-1}{2}, P)=1$ and hence $\lambda_q$ has order $P$ for $q=\frac{d(P-1)}{2P}$. 
Since $d-q=\frac{d(P+1)}{2P} > q$, 
by definition we have  $\beta(d)\ge \frac{d(P-1)}{2P}$.
We need to show that $\frac{d(P-1)}{2P}\geq \frac{3(d-1)}{7}$, which is equivalent to $6P\geq d(7-P)$. When $P\geq 7$ this is obvious. 
When $P=3$ we have  $d=P=3$ and when $P=5$ we have 
$d=5$ or $d=15$. The desired inequality  $6P\geq d(7-P)$ holds in all  three cases.

\bibliographystyle{plain}
\bibliography{free}

\end{document}